\newcommand\hcancel[2][black]{\setbox0=\hbox{$#2$}%
\rlap{\raisebox{.25\ht0}{\textcolor{#1}{\rule{0.7\wd0}{0.75pt}}}}#2} 
\newcommand\hcancelt[2][black]{\setbox0=\hbox{$#2$}%
\rlap{\raisebox{.25\ht0}{\textcolor{#1}{\hspace{0.3mm}\rule{0.7\wd0}{0.75pt}}}}#2} 
\newtheorem{thm}{Theorem}[section]
\newtheorem{cor}{Corollary}[section]
\newtheorem{rem}{Remark}[section]
\theoremstyle{definition}
\numberwithin{algorithm}{section}
\numberwithin{equation}{section}
\renewcommand{\theequation}{\thesection.\arabic{equation}}
\def\simgt{\,\hbox{\lower0.6ex\hbox{$>$}\llap{\raise0.3ex\hbox{$\sim$}}}\,}
\def\simlt{\,\hbox{\lower0.6ex\hbox{$<$}\llap{\raise0.3ex\hbox{$\sim$}}}\,}
\def\simgteq{\,\hbox{\lower0.6ex\hbox{$\ge$}\llap{\raise0.6ex\hbox{$\sim$}}}\,}
\def\simlteq{\,\hbox{\lower0.6ex\hbox{$\le$}\llap{\raise0.6ex\hbox{$\sim$}}}\,}
\def\applteq{\,\hbox{\lower0.6ex\hbox{$\le$}\llap{\raise0.8ex\hbox{$\approx$}}}\,}
\def\applt{\,\hbox{\lower0.6ex\hbox{$<$}\llap{\raise0.5ex\hbox{$\approx$}}}\,}
\DeclareMathAlphabet\mathbfcal{OMS}{cmsy}{b}{n}
\DeclareMathOperator*{\argmax}{argmax}
\DeclareMathOperator{\erf}{erf}
\DeclareMathOperator{\erfi}{erfi}
\def\user@resume{resume}
\def\user@intermezzo{intermezzo}
\newcounter{previousequation}
\newcounter{lastsubequation}
\newcounter{savedparentequation}
\renewenvironment{subequations}[1][]{%
      \def\user@decides{#1}%
      \setcounter{previousequation}{\value{equation}}%
      \ifx\user@decides\user@resume 
           \setcounter{equation}{\value{savedparentequation}}%
      \else  
      \ifx\user@decides\user@intermezzo
           \refstepcounter{equation}%
      \else
           \setcounter{lastsubequation}{0}%
           \refstepcounter{equation}%
      \fi\fi
      \protected@edef\theHparentequation{%
          \@ifundefined {theHequation}\theequation \theHequation}%
      \protected@edef\theparentequation{\theequation}%
      \setcounter{parentequation}{\value{equation}}%
      \ifx\user@decides\user@resume 
           \setcounter{equation}{\value{lastsubequation}}%
         \else
           \setcounter{equation}{0}%
      \fi
      \def\theequation  {\theparentequation  \alph{equation}}%
      \def\theHequation {\theHparentequation \alph{equation}}%
      \ignorespaces
}{%
  \ifx\user@decides\user@resume
       \setcounter{lastsubequation}{\value{equation}}%
       \setcounter{equation}{\value{previousequation}}%
  \else
  \ifx\user@decides\user@intermezzo
       \setcounter{equation}{\value{parentequation}}%
  \else
       \setcounter{lastsubequation}{\value{equation}}%
       \setcounter{savedparentequation}{\value{parentequation}}%
       \setcounter{equation}{\value{parentequation}}%
  \fi\fi
  \ignorespacesafterend
}
\newcommand{\C}[1]{\mathcal{#1}}
\newcommand{\F}[1]{\mathbf{#1}}
\newcommand{\bs}[1]{\boldsymbol{#1}}
\newcommand{\bsC}[1]{\boldsymbol{\C{#1}}}
\newcommand{\MB}[1]{\mathbb{#1}}
\newcommand{\MBS}{\MB{S}}
\newcommand{\MBG}{\MB{G}}
\newcommand{\MBSG}{\hat{\MBG}}
\newcommand{\MBR}{\mathbb{R}}
\newcommand{\MBRP}{\MBR^+}
\newcommand{\MBRzer}{\MBR_0}
\newcommand{\MBRzerP}{\MBRzer^+}
\newcommand{\MBZ}{\mathbb{Z}}
\newcommand{\MBZP}{\MBZ^+}
\newcommand{\MBZzer}{\MBZ_0}
\newcommand{\MBZzerP}{\MBZzer^+}
\newcommand{\MBZe}{\MBZ_e}
\newcommand{\MBZeP}{\MBZe^+}
\newcommand{\MBZzereP}{\MBZ_{0,e}^+}
\newcommand{\MBZOP}{\MBZ_{o}^+}
\newcommand{\MBT}{\mathbb{T}}
\newcommand{\MBJ}{\mathbb{J}}
\newcommand{\MBN}{\mathbb{N}}
\newcommand{\MFC}{\mathfrak{C}}
\newcommand{\MFK}{\mathfrak{K}}
\newcommand{\cancbra}[1]{\hcancel{[}#1\hcancelt{]}}
\newcommand{\sumd}{\sideset{}{'}}
\newcommand{\bmx}{\bm{x}}
\newcommand{\bmt}{\bm{t}}
\newcommand{\bmy}{\bm{y}}
\newcommand{\bmh}{\bm{h}}
\newcommand{\bmzer}{\bm{\mathit{0}}}
\newcommand{\bmone}{\bm{\mathit{1}}}
\newcommand{\bmb}{\bm{b}}
\newcommand{\hz}{\hat{z}}
\newcommand{\MBOmega}{\MB{\Omega}}
\newcommand{\foralla}{\,\forall_{\mkern-6mu a}\,}
\newcommand{\foralle}{\,\forall_{\mkern-6mu e}\,}
\newcommand{\foralls}{\,\forall_{\mkern-6mu s}\,}
\newcommand{\Def}[1]{\text{Def}\left(#1\right)}
\newcommand{\GLD}[4]{{}_{\;\;#1}^{GL}D_{#2}^{#3}{#4}}
\newcommand{\RLD}[4]{{}_{\;\;#1}^{RL}D_{#2}^{#3}{#4}}
\newcommand{\CD}[4]{{}_{#1}^CD_{#2}^{#3}{#4}}
\newcommand{\MRLD}[4]{{}_{\hspace{3.6mm}#1}^{MRL}D_{#2}^{#3}{#4}}
\newcommand{\MCD}[4]{{}_{\;\;#1}^{MC}D_{#2}^{#3}{#4}}
\newcommand{\MD}[4]{{}_{#1}^{M}D_{#2}^{#3}{#4}}
\newcommand{\ED}[4]{{}_{#1}^{E}D_{#2}^{#3}{#4}}
\def\BState{\State\hskip-\ALG@thistlm}
    \newcommand*{\algrule}[1][\algorithmicindent]{\makebox[#1][l]{\hspace*{.5em}\thealgruleextra\vrule height \thealgruleheight depth \thealgruledepth}}%
\newcommand*{\thealgruleextra}{}
\newcommand*{\thealgruleheight}{.75\baselineskip}
\newcommand*{\thealgruledepth}{.25\baselineskip}
\def\ALG@printindent{%
    \ifnum \theALG@nested>0
        \ifx\ALG@text\ALG@x@notext
        \else
            \unskip
            \addvspace{-1pt}
            \ALG@printindent@tempcnta=1
            \loop
                \algrule[\csname ALG@ind@\the\ALG@printindent@tempcnta\endcsname]%
                \advance \ALG@printindent@tempcnta 1
            \ifnum \ALG@printindent@tempcnta<\numexpr\theALG@nested+1\relax
            \repeat
        \fi
    \fi
    }%
\patchcmd{\ALG@doentity}{\noindent\hskip\ALG@tlm}{\ALG@printindent}{}{\errmessage{failed to patch}}
\newbox\statebox
\newcommand{\myState}[1]{%
    \setbox\statebox=\vbox{#1}%
    \edef\thealgruleheight{\dimexpr \the\ht\statebox+1pt\relax}%
    \edef\thealgruledepth{\dimexpr \the\dp\statebox+1pt\relax}%
    \ifdim\thealgruleheight<.75\baselineskip
        \def\thealgruleheight{\dimexpr .75\baselineskip+1pt\relax}%
    \fi
    \ifdim\thealgruledepth<.25\baselineskip
        \def\thealgruledepth{\dimexpr .25\baselineskip+1pt\relax}%
    \fi
    \State #1%
    \def\thealgruleheight{\dimexpr .75\baselineskip+1pt\relax}%
    \def\thealgruledepth{\dimexpr .25\baselineskip+1pt\relax}%
}
\newcommand{\oset}[3][0ex]{%
  \mathrel{\mathop{#3}\limits^{
    \vbox to#1{\kern-2\ex@
    \hbox{$\scriptstyle#2$}\vss}}}}
\begin{document}
\begin{frontmatter}
\title{Fourier-Gegenbauer Pseudospectral Method for Solving Time-Dependent One-Dimensional Fractional Partial Differential Equations with Variable Coefficients and Periodic Solutions}
\author[XMUM,Assiut]{Kareem T. Elgindy}
\ead{kareem.elgindy@xmu.edu.my}
\address[XMUM]{Mathematics Department, School of Mathematics and Physics, Xiamen University Malaysia, Sepang 43900, Malaysia}
\address[Assiut]{Mathematics Department, Faculty of Science, Assiut University, Assiut 71516, Egypt}
\begin{abstract}
In this paper, we present a novel pseudospectral (PS) method for solving a new class of initial-value problems (IVPs) of time-dependent one-dimensional fractional partial differential equations (FPDEs) with variable coefficients and periodic solutions. A main ingredient of our work is the use of the recently developed periodic RL/Caputo fractional derivative (FD) operators with sliding positive fixed memory length of \citet{bourafa2021periodic} or their reduced forms obtained by \citet{elgindy2023fourier} as the natural FD operators to accurately model FPDEs with periodic solutions. The proposed method converts the IVP into a well-conditioned linear system of equations using the PS method based on Fourier collocations and Gegenbauer quadratures. The reduced linear system has a simple special structure and can be solved accurately and rapidly by using standard linear system solvers. A rigorous study of the computational storage requirements as well as the error and convergence of the proposed method is presented. The idea and results presented in this paper are expected to be useful in the future to address more general problems involving FPDEs with periodic solutions.
\end{abstract}
\begin{keyword}
Fourier collocation; Fractional derivative; Fractional partial differential equation; Gegenbauer quadrature; Periodic solution. 
\end{keyword}
\end{frontmatter}

\section{Introduction}
\label{Int}
Fractional partial differential equations (FPDEs) have become the natural mathematical models to model various problems and applications compared with integer-order partial differential equations (PDEs) due to their high accuracy in addition to the flexibility and non-locality of fractional derivatives (FDs) compared with classical integer-order derivatives. In fact, FDs and FPDEs have played a very significant roles in chemistry, chemical engineering, geomechanics, computer vision, Coronavirus disease spread, hydrological processes, oceanography, decision and control, nuclear energy, medicine and surgery; cf. \cite{khan2023level,sioofy2022proposed,lisha2023analytical,su2023random,
arefin2022investigation,asjad2023applications,owolabi2022fractal,
mascarenhas2022stochastic,hamada2022nonlinear,ibrahim2022medical}. 

In contrast to integer-order derivatives, classical Riemann-Liouville (RL) and Caputo FDs of a non-constant $T$-periodic function are not $T$-periodic functions, limiting their use to model real periodic phenomena and prompting the need for research advancements in this substantial field of science. In this study, we provide the first successful attempt in the literature to model FPDEs with periodic solutions using periodic FD operators that can preserve the periodicity of a periodic function and allow for the existence of periodic solutions to FPDEs. In particular, we model FPDEs with periodic solutions using the recently developed periodic FD operator of \citet{elgindy2023fourier}, which is a useful reduced form of an earlier periodic FD inaugurated by \citet{bourafa2021periodic}. The employed FD operator is a useful modification of the classical RL and Caputo FD operators by fixing their memory length and varying their lower terminals. The reduced FD operator developed in \cite{elgindy2023fourier} allows accurate computation of the singular integral of the FD formula defined in \cite{bourafa2021periodic} by removing the singularity prior to numerical integration using a smart change of variables and renders the reduced integral well behaved. In fact, the introduced transformation largely simplifies the problem of calculating the periodic FDs of periodic functions to the problem of evaluating the integral of the first derivatives of their trigonometric Lagrange interpolating polynomials, which can be treated accurately and efficiently using Gegenbauer
quadratures. This approach, together with Fourier collocations and interpolations celebrated for their stability, rapid convergence, and cost efficiency, constitutes the core of our proposed Fourier-Gegenbauer (FG) based pseudospectral (PS) method (FGPS method): A novel PS method based on Fourier collocation and Gegenbauer quadratures for solving FPDEs with periodic solutions. In particular, the proposed method converts the initial-value problems (IVPs) of FPDEs with periodic solutions into well-conditioned linear systems of equations using FG-based PS technology. The reduced linear systems have sparse block global coefficient matrices that can be generated efficiently using ``smart'' index matrix mappings, which allow us to solve the reduced linear systems very accurately and rapidly using standard linear system solvers. Although the use of Fourier and Jacobi polynomials for solving FPDEs is not novel and appeared in a number of works such as \cite{li2015numerical,li2019theory,doha2011chebyshev,bueno2014fourier}, to the best of our knowledge, the current study not only provides the first successful attempt to model FPDEs with periodic solutions using periodic FD operators but also provides the first efficient, stable, and highly accurate numerical method for solving them. We confine our work to FDs with a fractional-order $0 < \alpha < 1$; however, the current work can be easily extended to cover higher-order FPDEs with periodic solutions. The proposed FGPS method is an extension to classical PS methods, which are considered to be one of the biggest technologies for solving PDEs that were
largely developed about a half century ago \cite{gottlieb1977numerical,fornberg1994review,Fornberg1996practical,
boyd2001chebyshev,ElgindyHareth2023a}. For a clear exposition of PS methods exhibiting a wide range of outlooks
on the subject, the reader may consult, to mention a few, the books \cite{Fornberg1996practical,kopriva2009}. A comprehensive review on recent progress on Fourier PS methods can be found in \cite{elgindy2019high,Elgindy2023a,Elgindy2023b,
elgindy2023optimal,elgindy2023fourier}. For a survey on Gegenbauer polynomials and quadratures and their relevant theory, the reader may consult \cite{Elgindy2013b,elgindy2018highb,elgindy2018optimal,Elgindy2019b,
ElgindyHareth2023a}, and the references therein.

The remainder of this paper is organized as follows. In Section \ref{sec:PN}, we provide some primary definitions of FDs and notations to be used in the paper’s presentation. In Section \ref{sec:PS1}, the IVP of the FPDE under study is introduced in general form. In Section \ref{sec:FGPM1}, we present the FGPS method for solving the IVP. A study on the computational storage requirements of the FGPS method is presented in Section \ref{sec:CS1}. In Section \ref{sec:ECA1}, we present error and convergence analyses of the FGPS method. The performance of the FGPS method is demonstrated in Section \ref{sec:NS1}. Finally, Section \ref{sec:Conc} provides concluding remarks.


\section{Preliminaries and Notations}
\label{sec:PN}
The following notations are used throughout this study to abridge and simplify the mathematical formulas. Many of these
notations appeared earlier in \cite{Elgindy2023a,elgindy2023fourier}; however, for convenience and to keep the paper self-explanatory, we summarize them below together with the new notations.

\noindent\textbf{Logical Symbols.} The  symbols $\forall, \foralla, \foralle$, and $\foralls$ stand for the phrases ``for all,'' ``for any,'' ``for each,'' and ``for some,'' in respective order.\\[0.5em]
\textbf{List and Set Notations.} $\MFC$ denotes the set of all complex-valued functions. Moreover, $\MBR$, $\MBZ, \MBZP, \MBZzerP, \MBZOP, \MBZeP$, and $\MBZzereP$ denote the sets of real numbers, integers, positive integers, non-negative integers, positive odd integers, positive even integers, and non-negative even integers, respectively. When we overset any of the above sets by a right arrow we mean the subset of that set containing sufficiently large numbers; for example, $\oset{\rightarrow}{\MBZP}$ stands for the set of all sufficiently large positive integers. The notations $i$:$j$:$k$ or $i(j)k$ indicate a list of numbers from $i$ to $k$ with increment $j$ between numbers, unless the increment equals one where we use the simplified notation $i$:$k$. For example, $0$:$0.5$:$2$ simply means the list of numbers $0, 0.5, 1, 1.5$, and $2$, while $0$:$2$ means $0, 1$, and $2$. The list of symbols $y_1, y_2, \ldots, y_n$ is denoted by $\left. y_i \right|_{i=1:n}$ or simply $y_{1:n}$, and their set is represented by $\{y_{1:n}\}\,\foralla n \in \MBZP$; the same set excluding $y_j$ is denoted by $\{y_{1:n}\}_{\neq y_j}\,\foralla j \in \MBN_n$. The list of ordered pairs $(x_{m,0},t_{n,0}), (x_{m,0},t_{n,1})$, $\ldots, (x_{m,1},t_{n,0}), \ldots, (x_{m,m-1},t_{n,n-1})$ is denoted by $(x_{m,0:m-1}, t_{n,0:n-1})$ and their set is denoted by $\{(x_{m,0:m-1}, t_{n,0:n-1})\}$. We define $\MBJ_n = \{0:n-1\}$ and $\MBN_n = \{1:n\}\,\foralla n \in \MBZP$. $\MBS_n^{T} = \left\{t_{n,0:n-1}\right\}$ is the set of $n$ equally-spaced points such that $t_{n,j} = T j/n\, \forall j \in \MBJ_n$. For a set of ordered pairs, we define $\MBS_{m,n}^{T_1,T_2} = \{(x_{m,0:m-1}, t_{n,0:n-1}): x_{m,l} \in \MBS_{m}^{T_1}, t_{n,j} \in \MBS_{n}^{T_2}\,\foralla (l,j) \in \MBJ_m \times \MBJ_n, \{m,n\} \subset \MBZP, \{T_1,T_2\} \subset \MBRP$\}. $\MBG_n^{\lambda} = \left\{z_{n,0:n}^{\lambda}\right\}$ is the set of Gegenbauer-Gauss (GG) zeros of the $(n+1)$st-degree Gegenbauer polynomial with index $\lambda > -1/2$, and $\MBSG_{1,n}^{\lambda} = \left\{\hz_{n,0:n}^{\lambda}: \hz_{n,0:n}^{\lambda} = \frac{1}{2} \left(z_{n,0:n}^{\lambda}+1\right)\right\}$ is the shifted Gegenbauer-Gauss (SGG) points set in the interval $[0,1]\,\foralla n \in \MBZP$; cf. \cite{Elgindy201382,elgindy2018high,elgindy2018optimal}. The specific interval $[0, T]$ is denoted by $\MBOmega_{T}\,\forall T > 0$; for example, $[0, t_{n,j}]$ is denoted by ${\MBOmega_{t_{n,j}}}\,\forall j \in \MBJ_n$. 
Cartesian products like $\MBOmega_{T_1} \times \MBOmega_{T_2}, \MBJ_{n_1} \times \MBJ_{n_2}$, and $\MBN_{n_1} \times \MBN_{n_2}$ are denoted by $\MBOmega_{T_1, T_2}, \MBJ_{n_1,n_2}$, and $\MBN_{n_1,n_2}$, respectively, $\foralla \{T_1,T_2\} \subset \MBRP$ and $\{n_1,n_2\} \subset \MBZP$.\\[0.5em] 
\textbf{Function Notations.} $\left\lceil  \cdot  \right\rceil, \left\lfloor {\cdot} \right\rfloor$ and $\Gamma$ denote the ceil, floor, and Gamma functions, respectively. $\left( {\begin{array}{*{20}{c}}
\alpha \\
k
\end{array}} \right)$ is the binomial coefficient indexed by the pair $\alpha \in \MBR$ and $k \in \MBZzerP$. $(\alpha)^{(k)} = \prod\limits_{l = 0}^{k-1} {(\alpha  - l)}$ is the $k$th-factorial power (the falling factorial) function of $\alpha \in \MBR$. For convenience, we shall denote $g(t_{n})$ by $g_n \foralla g \in \MFC, n \in \MBZ, t_n \in \MBR$, unless stated otherwise. We extend this writing convention to multidimensional functions; for example, to evaluate a bivariate function $u$ at some discrete points set $\left\{(x_{0:m-1},t_{0:n-1})\right\}$, we write $u_{m,n}$ to simply mean $u(x_m,t_n)\,\forall x_m \in \MBS_m^{T_1}, t_n \in \MBS_n^{T_2}$. $u_{0:m-1,0:n-1}$ stands for the list of function values $u_{0,0}, u_{0,1}, \ldots, u_{0,n-1}, u_{1,0}, \ldots, u_{1,n-1}, \ldots, u_{m-1,n-1}$. Finally, by $u$ is a ${}_xT_1$-${}_tT_2$-periodic function we mean $u$ is a $T_1$-periodic and a $T_2$-periodic function with respect to $x$ and $t$, respectively, $\foralla u: \MBR^2 \to \MBR$.\\[0.5em]
\textbf{Integral Notations.} We denote $\int_0^{b} {h(t)\,dt}$ and $\int_a^{b} {h(t)\,dt}$ by $\C{I}_{b}^{(t)}h$ and $\C{I}_{a, b}^{(t)}h$, respectively, $\foralla$ integrable $h \in \MFC, \{a, b\} \subset \MBR$. If the integrand function $h$ is to be evaluated at any other expression of $t$, say $u(t)$, we express $\int_0^{b} {h(u(t))\,dt}$ and $\int_a^b {h(u(t))\,dt}$ with a stroke through the square brackets as $\C{I}_{b}^{(t)}h\cancbra{u(t)}$ and $\C{I}_{a,b}^{(t)}h\cancbra{u(t)}$ in respective order.\\[0.5em] 
\textbf{Space and Norm Notations.} $\MBT_{T}$ is the space of $T$-periodic, univariate functions $\foralla T \in \MBRP$. $\Def{\MBOmega}$ is the space of functions defined on $\MBOmega$. $C^k(\MBOmega)$ is the space of $k$ times continuously differentiable functions on ${\MBOmega}\,\forall k \in \MBZzerP$ with the common understanding that $C(\MBOmega)$ means $C^0(\MBOmega)$. We define the following two spaces:
\begin{gather*}
{}^{1}_{T}\mathfrak{X}_{n_1}^{n_2} = \{[y_0, \ldots, y_{n_1}]^{\top}: \MBR \to \MBR^{n_1}\text{ s.t. } y_j \in \MBT_{T} \cap C^{n_2}(\MBR)\,\forall j \in \MBN_{n_1}\},\\
{}^{\quad 2}_{T_1,T_2}\mathfrak{X}_{n_1}^{n_2} = \{[y_0, \ldots, y_{n_1}]^{\top}: \MBR^2 \to \MBR^{n_1}\text{ s.t. } y_j(x,t) = z \in \MBR, y_j \in \MBT_{T_1} \cap C^{n_2}(\MBR) \foralla t \in \MBOmega_{T_2}\text{ and }y_j \in \MBT_{T_2} \cap C^{n_2}(\MBR) \foralla x \in \MBOmega_{T_1}\,\forall j \in \MBN_{n_1}\},
\end{gather*}
where ${}^{1}_{T}\mathfrak{X}_{n_1}^{n_2}$ is the space of $T$-periodic, $n_2$-times continuously differentiable, $n_1$-dimensional single-variable vector functions on $\MBR$, and ${}^{\quad 2}_{T_1,T_2}\mathfrak{X}_{n_1}^{n_2}$ is the space of ${}_xT_1$-${}_tT_2$-periodic, $n_2$-times continuously differentiable, $n_1$-dimensional bivariate vector functions on $\MBR^2$. $L^p({\MBOmega})$ is the Banach space of measurable functions $u$ defined on ${\MBOmega}$ such that ${\left\| u \right\|_{{L^p}}} = {\left( {{\C{I}_{\MBOmega}}{{\left| u \right|}^p}} \right)^{1/p}} < \infty\,\forall p \ge 1$. In particular, we write $\left\|u\right\|_{\infty}$ to denote ${\left\| u \right\|_{{L^{\infty}}}}$. Finally, for vector arguments, $\left\|\cdot\right\|_2$ and $\left\|\cdot\right\|_{\infty}$ denote the usual Euclidean and infinity norms of vectors.\\[0.5em]
\textbf{Vector Notations.} We shall use the shorthand notations $\bmt_N$ and $g_{0:N-1}$ to stand for the column vectors $[t_{N,0}, t_{N,1}, \ldots$, $t_{N,N-1}]^{\top}$ and $[g_0, g_1, \ldots, g_{N-1}]^{\top}\,\forall N \in \MBZP$ in respective order. In general, $\foralla h \in \MFC$ and vector $\bmy$ whose $i$th-element is $y_i \in \MBR$, and is denoted by $\bmy(i)$, the notation $h(\bmy)$ stands for a vector of the same size and structure of $\bmy$ such that $h(y_i)$ is the $i$th element of $h(\bmy)$. Moreover, by $\bmh(\bmy)$ or $h_{1:m}\cancbra{\bmy}$ with a stroke through the square brackets, we mean $[h_1(\bmy), \ldots, h_m(\bmy)]^{\top}\,\foralla m$-dimensional column vector function $\bmh$, with the realization that the definition of each array $h_i(\bmy)$ follows the former notation rule $\foralle i$. Furthermore, if $\bmy$ is a vector function, say $\bmy = \bmy(t)$, then we write $h(\bmy(\bmt_N))$ to denote $[h(\bmy(t_{N,0})), h(\bmy(t_{N,1})), \ldots$, $h(\bmy(t_{N,N-1}))]^{\top}$.\\[0.5em] 
\textbf{Matrix Notations.} $\F{O}_n, \F{1}_n$, and $\F{I}_n$ stand for the zero, all ones, and the identity matrices of size $n$. $\F{C}_{n,m}$ indicates that $\F{C}$ is a rectangular matrix of size $n \times m$; moreover, $\F{C}_n$ denotes a row vector whose elements are the $n$th-row elements of $\F{C}$, except when $\F{C}_n = \F{O}_n, \F{1}_n$, or $\F{I}_n$, where it denotes the size of the matrix. For a two-dimensional matrix $\F{C}$, the notation $\F{C}_{\hcancel{i}}$ stands for the matrix obtained by deleting the $i$th-row of $\F{C}$. Also, $\F{C}_{i,\hcancel{j}}$ is the row vector obtained by deleting the $j$th-column of $\F{C}_i$. The common notation $\F{C}(i,j)$ refers to the $(i, j)$ entry of $\F{C}$. For convenience, a vector is represented in print by a bold italicized symbol while a two-dimensional matrix is represented by a bold symbol, except for a row vector whose elements form a certain row of a matrix where we represent it in bold symbol as stated earlier. For example, $\bmone_n$ and $\bmzer_n$ denote the $n$-dimensional all ones- and zeros- column vectors, while $\F{1}_n$ and $\F{O}_n$ denote the all ones- and zeros- matrices of size $n$, respectively. The notation $[.;.]$ denotes the usual vertical concatenation. Finally, $\kappa(\F{A})$ denotes the condition number of a matrix $\F{A}$.\\[0.5em]
\textbf{Common Fractional Differentiation Formulas.} Let $\alpha \in \MBRP$, $m = \left\lceil  \alpha  \right\rceil$, and $f \in \Def{\MBOmega_T}$. The $\alpha$-th order Gr\"{u}nwald-Letnikov derivative of $f$ with respect to $t$ and a terminal value $a$ is given by
\begin{equation}
\GLD{a}{t}{\alpha}{f(t)} = \mathop {\lim }\limits_{\scriptstyle h \to 0\atop
\scriptstyle nh = t - a} {h^{ - \alpha }}\sum\limits_{k = 0}^n {{{( - 1)}^k}\left( {\begin{array}{*{20}{c}}
\alpha \\
k
\end{array}} \right)f(t - kh)}.
\end{equation}
The $\alpha$-th order left RL and Caputo FDs are denoted by $\RLD{0}{t}{\alpha}{f(t)}$ and $\CD{0}{t}{\alpha}{f(t)}$, respectively, and are defined for $t \in \MBOmega_T$ by
\begin{align}
\RLD{0}{t}{\alpha}{f(t)} &= \left\{ \begin{array}{l}
\frac{1}{{\Gamma (m - \alpha )}}\frac{{{d^m}}}{{d{t^m}}}\C{I}_{t}^{(\tau)}\left[{{{(t - \tau )}^{m - \alpha  - 1}}f}\right],\quad \alpha \notin \MBZP,\\
f^{(m)}(t),\quad \alpha \in \MBZP,
\end{array} \right.\\
\CD{0}{t}{\alpha}{f(t)} &= \left\{ \begin{array}{l}
\frac{1}{{\Gamma (m - \alpha )}} \C{I}_t^{(\tau)} \left[{{{(t - \tau )}^{m - \alpha  - 1}}f^{(m)}}\right],\quad \alpha \notin \MBZP,\\
f^{(m)}(t),\quad \alpha \in \MBZP.
\end{array} \right.
\end{align}
The RL and Caputo FDs with sliding fixed memory length $L > 0$, denoted by $\MRLD{L}{t}{\alpha}{f(t)}$ and $\MCD{L}{t}{\alpha}{f(t)}$, respectively, are defined by
\begin{align}
\MRLD{L}{t}{\alpha}{f(t)} = \left\{ \begin{array}{l}
\frac{1}{{\Gamma (m - \alpha )}}\frac{{{d^m}}}{{d{t^m}}}\C{I}_{t-L, t}^{(\tau)}\left[{{{(t - \tau )}^{m - \alpha  - 1}}f}\right],\quad \alpha \notin \MBZP,\\
f^{(m)}(t),\quad \alpha \in \MBZP,
\end{array} \right.\\
\MCD{L}{t}{\alpha}{f(t)} = \left\{ \begin{array}{l}
\frac{1}{{\Gamma (m - \alpha )}} \C{I}_{t-L, t}^{(\tau)}\left[{{{(t - \tau )}^{m - \alpha  - 1}}f^{(m)}}\right],\quad \alpha \notin \MBZP,\label{eq:PMFCD1}\\
f^{(m)}(t),\quad \alpha \in \MBZP,
\end{array} \right.
\end{align}
cf. \cite{bourafa2021periodic}. If $f \in C^{(m)}(\MBRzerP)$, then $\MRLD{L}{t}{\alpha}{f(t)} = \MCD{L}{t}{\alpha}{f(t)}$, so we can denote both modified fractional operators by $\MD{L}{t}{\alpha}{}$. A reduced form of $\MD{L}{t}{\alpha}{}$ with constant integration limits, denoted by $\ED{L}{t}{\alpha}{f(t)}$, is given by
\begin{equation}\label{eq:RedElgPMFCD1}
\ED{L}{t}{\alpha}{f(t)} = \frac{L^{m-\alpha}}{(1-\alpha) \Gamma(m-\alpha)} \C{I}_1^{(y)} {\left[y^{\frac{m-1}{1-\alpha}} f^{(m)}\cancbra{t-L\,y^{\frac{1}{1-\alpha}}}\right]},
\end{equation}
cf. \cite{elgindy2023fourier}.

\begin{rem}
For a comprehensive review and background on the periodic derivative $\MD{L}{t}{\alpha}{f}$, the reader may consult \cite{bourafa2021periodic}. A proof that the modified derivative $\MD{L}{t}{\alpha}{f}$ indeed preserves the periodicity of $f \foralla$ periodic function $f \in C^{m}[a-L,b]: \alpha \in \MBRP\backslash\MBZP, L \in \MBRP, m = \left\lceil  \alpha  \right\rceil, \{a,b\} \subset \MBR: a < b$ can be found in \cite[Theorem 3.9]{bourafa2021periodic}. The reader may also consult Sections 3.4 and 3.5 of the same reference for a comparison between classical fractional-order derivatives and the modified derivative, in addition to two examples, including one on a physical model, to support the consistency of the modified derivative motif.
\end{rem}

\begin{rem}
The Gegenbauer polynomials we adopt here in this paper are the ones standardized by \citet[Eq. (6)]{Doha199075} or its equivalent form \cite[Eq. (A.1)]{elgindy2013fast}, where Chebyshev polynomials of the first kind and Legendre polynomials become special cases of this family of orthogonal polynomials for $\lambda = 0$ and $0.5$, respectively. The generated form of Gegenbauer polynomials are therefore different than those standardized by \citet{szeg1939orthogonal} in which the Gegenbauer polynomial evaluates to zero at $\lambda = 0$, for any nonnegative integer degree.
\end{rem}

\section{Problem Statement}
\label{sec:PS1}
In this study, we consider the following class of time-dependent one-dimensional FPDEs with variable coefficients and periodic solutions:
\begin{subequations}
\begin{equation}\label{eq:1}
a(x,t)\,\ED{L}{x}{\alpha}{u(x,t)} + b(x,t)\,\ED{L}{t}{\beta}{u(x,t)} = f(x,t),
\end{equation}
subject to the initial conditions
\begin{equation}\label{eq:2}
u(x,0) = g(x),\quad u(0,t) = h(t),
\end{equation}
\end{subequations}
where $\{a, b, f\} \subset C\left(\MBR^2\right), g \in {}^{1}_{T_1}\mathfrak{X}_{1}^{0}$, and $h \in {}^{1}_{T_2}\mathfrak{X}_{1}^{0}$ are some given functions, $u \in {}^{\quad 2}_{T_1,T_2}\mathfrak{X}_{1}^{1}$ is the solution, $\ED{L}{x}{\alpha}{u(x,t)}$ and $\ED{L}{t}{\beta}{u(x,t)}$ are the $\alpha$th- and $\beta$th-order RL/Caputo FDs of $u$ with sliding fixed memory length $L \in \MBRP$ with respect to $x$ and $t$, respectively, such that $\{\alpha,\beta\} \subset (0, 1]$.

\section{The FGPS Method}
\label{sec:FGPM1}
Collocating the FPDE \eqref{eq:1} at the rectangular mesh grid set $\MBS_{N_1,N_2}^{T_1,T_2}: \{N_1, N_2\} \subset \MBZeP$ provides the following $N_1 N_2$ system of linear equations:
\begin{equation}\label{eq:1}
a_{l,j}\,\ED{L}{x_{N_1,l}}{\alpha}{u(x,t_{N_2,j})} + b_{l,j}\,\ED{L}{t_{N_2,j}}{\beta}{u(x_{N_1,l},t)} = f_{l,j}\quad \forall (l,j) \in \MBJ_{N_1,N_2}.
\end{equation}
To solve Eqs. \eqref{eq:1} for the grid point values $u_{l,j}$, we can approximate the FDs by using \cite[Eq. (4.9)]{elgindy2023fourier} to obtain the following numerical formulas:
\begin{subequations}
\begin{align}
\ED{L}{x_{N_1,l}}{\alpha}{u(x,t_{N_2,j})} &\approx \sum\limits_{k = 0}^{N_1 - 1} {{}_{L}^E\C{D}^{\alpha}_{N_G,l,k} {u_{k,j}}}\quad \forall j \in \MBJ_{N_2},\label{eq:RedElgPMFCD101app2}\\
\ED{L}{t_{N_2,j}}{\beta}{u(x_{N_1,l},t)} &\approx \sum\limits_{k = 0}^{N_2 - 1} {{}_{L}^E\C{D}^{\beta}_{N_G,j,k} {u_{l,k}}}\quad \forall l \in \MBJ_{N_1},\label{eq:RedElgPMFCD101app22}
\end{align}
\end{subequations}
where 
\begin{equation}\label{eq:reqpr1}
{}_{L}^E\C{D}^{\gamma}_{N_G,r,s} = \frac{L^{1-\gamma}}{\Gamma(2-\gamma)} {}_{L}^E\C{Q}^{\gamma}_{N_G,r,s}\quad \forall \gamma \in (0,1),\,\{r,s\} \subset \MBJ_{n}: n \in \MBZeP,
\end{equation}
and ${}_{L}^E\C{Q}^{\gamma}_{N_G,r,s}$ is the $\gamma$th-order FG-based PS quadrature (FGPSQ) with index $L$ as defined by \cite[Formula (4.8)]{elgindy2023fourier}. We similarly refer to ${}_{L}^E\C{D}^{\gamma}_{N_G,r,s}$ by the $\gamma$th-order FG-based PS differentiation (FGPSD) with index $L$. Eqs. \eqref{eq:RedElgPMFCD101app2} and \eqref{eq:RedElgPMFCD101app22} can be further expressed in matrix notation as
\begin{subequations}
\begin{align}
\ED{L}{\bmx_{N_1}}{\alpha}{u(x,t_j)} &\approx {}_{L}^E\bsC{D}_{N_G}^{\alpha}\,u_{0:N_1-1,j},\label{eq:RedElgPMFCD101app3}\\
\ED{L}{\bmt_{N_2}}{\beta}{u(x_l,t)} &\approx {}_{L}^E\bsC{D}_{N_G}^{\beta}\,u_{l,0:N_2-1},\label{eq:RedElgPMFCD101app32}
\end{align}  
\end{subequations}
where 
\[
\ED{L}{\bmx_{N_1}}{\alpha}{u(x,t)} = \left[\ED{L}{x_{N_1,0}}{\alpha}{u(x,t)}, \ldots, \ED{L}{x_{N_1,N_1-1}}{\alpha}{u(x,t)}\right]^{\top},\quad \ED{L}{\bmt_{N_2}}{\beta}{u(x,t)} = \left[\ED{L}{t_{N_2,0}}{\beta}{u(x,t)}, \ldots, \ED{L}{t_{N_2,N_2-1}}{\beta}{u(x,t)}\right]^{\top},
\]
and
\begin{gather}
{}_{L}^E\bsC{D}^{\alpha}_{N_G} = \left[{}_{L}^E\bsC{D}^{\alpha}_{N_G,0}; \ldots; {}_{L}^E\bsC{D}^{\alpha}_{N_G,N_1-1}\right],\quad {}_{L}^E\bsC{D}^{\beta}_{N_G} = \left[{}_{L}^E\bsC{D}^{\beta}_{N_G,0}; \ldots; {}_{L}^E\bsC{D}^{\beta}_{N_G,N_2-1}\right]:\\
{}_{L}^E\bsC{D}^{\alpha}_{N_G,l} = \left[{}_{L}^E\C{D}^{\alpha}_{N_G,l,0}, \ldots, {}_{L}^E\C{D}^{\alpha}_{N_G,l,N_1-1}\right]\quad \forall l \in \MBJ_{N_1},\quad {}_{L}^E\bsC{D}^{\beta}_{N_G,j} = \left[{}_{L}^E\C{D}^{\beta}_{N_G,j,0}, \ldots, {}_{L}^E\C{D}^{\beta}_{N_G,j,N_2-1}\right]\quad \forall j \in \MBJ_{N_2}.
\end{gather}
We refer to ${}_{L}^E\bsC{D}^{\gamma}_{N_G}$ by the $\gamma$th-order FG-based PS fractional differentiation matrix (FGPSFDM) with index $L$. Substituting Formulas \eqref{eq:RedElgPMFCD101app2} and \eqref{eq:RedElgPMFCD101app22} into Eqs. \eqref{eq:1} and shuffling the terms that include the solution initial values onto the right hand side of
the equations yield the following approximate linear equations system:
\begin{equation}\label{eq:1K1}
a_{l,j}\,{}_{L}^E\bsC{D}^{\alpha}_{N_G,l,\hcancel{0}} u_{1:N_1-1,j} + b_{l,j}\,{}_{L}^E\bsC{D}^{\beta}_{N_G,j,\hcancel{0}} u_{l,1:N_2-1} = f_{l,j} - a_{l,j} {}_{L}^E\C{D}^{\alpha}_{N_G,l,0} h_j - b_{l,j}\,{}_{L}^E\C{D}^{\beta}_{N_G,j,0} g_{l}\quad \forall (l,j) \in \MBN_{N_1-1,N_2-1}.
\end{equation}
To put the pointwise representation of the derived system of Eqs. \eqref{eq:1K1} in the following matrix form 
\begin{equation}\label{eq:updatesys1}
\F{A}\,\bm{U} = \bm{F},
\end{equation}
we define the index matrix 
\begin{equation}\label{eq:smartindm1}
\C{N} = \F{X} + (N_1-1) \F{Y} + 1,
\end{equation}
where $\F{X}$ is the $(N_2-1) \times (N_1-1)$ matrix whose each row is a copy of the row array $[$0:$N_1$-2$]$, and $\F{Y}$ is the matrix of the same size with each column being a copy of the array $[$0:$N_2$-2$]$. The elements of the global collocation matrix $\F{A}$ and the column vector $\bm{F}$ are therefore given by
\begin{subequations}
	\begin{align}
	{\F{A}}\left({\C{N}(j,l),\C{N}(j,k)}\right) &= a_{l,j}\,{}_{L}^E\bsC{D}^{\alpha}_{N_G,l,k},\quad \forall k \in \{1:N_1-1\}_{\neq l},\label{eq:addeqeqA1}\\
	{\F{A}}\left({\C{N}(j,l),\C{N}(k,l)}\right) &= b_{l,j}\,{}_{L}^E\bsC{D}^{\beta}_{N_G,j,k},\quad \forall k \in \{1:N_2-1\}_{\neq j},\label{eq:addeqeqA2}\\
	{\F{A}}\left({\C{N}(j,l),\C{N}(j,l)}\right) &= a_{l,j}\,{}_{L}^E\bsC{D}^{\alpha}_{N_G,l,l} + b_{l,j}\,{}_{L}^E\bsC{D}^{\beta}_{N_G,j,j},\label{eq:addeqeqA3}\\
	\bm{F}\left({\C{N}(j,l)}\right) &= f_{l,j} - a_{l,j} {}_{L}^E\C{D}^{\alpha}_{N_G,l,0} h_j - b_{l,j}\,{}_{L}^E\C{D}^{\beta}_{N_G,j,0} g_{l},
\end{align}
$\forall (l,j) \in \MBN_{N_1-1,N_2-1}$.
\end{subequations}
Clearly, $\F{A}$ is a square matrix of size $(N_1-1) (N_2-1)$. It is interesting to note that $\F{A}$ is a sparse block matrix with the following special structure:
\begin{equation}
\F{A} = \left( {\begin{array}{*{20}{c}}
{\bs{\Lambda} _1^{\alpha ,\beta }}&{\bs{\Upsilon} _{1,2}^\beta }&{\bs{\Upsilon} _{1,3}^\beta }&{\bs{\Upsilon} _{1,4}^\beta }& \cdots &{\bs{\Upsilon} _{1,{N_2} - 1}^\beta }\\
{\bs{\Upsilon} _{2,1}^\beta }&{\bs{\Lambda} _2^{\alpha ,\beta }}&{\bs{\Upsilon} _{2,3}^\beta }&{\bs{\Upsilon} _{2,4}^\beta }& \cdots &{\bs{\Upsilon} _{2,{N_2} - 1}^\beta }\\
{\bs{\Upsilon} _{3,1}^\beta }&{\bs{\Upsilon} _{3,2}^\beta }&{\bs{\Lambda} _3^{\alpha ,\beta }}&{\bs{\Upsilon} _{3,4}^\beta }& \cdots &{\bs{\Upsilon} _{3,{N_2} - 1}^\beta }\\
 \vdots & \ddots & \ddots & \ddots & \ddots & \vdots \\
 \vdots & \ddots & \ddots & \ddots & \ddots &{\bs{\Upsilon} _{{N_2} - 2,{N_2} - 1}^\beta }\\
{\bs{\Upsilon} _{{N_2} - 1,1}^\beta }&{\bs{\Upsilon} _{{N_2} - 1,2}^\beta }&{\bs{\Upsilon} _{{N_2} - 1,3}^\beta }& \cdots &{\bs{\Upsilon} _{{N_2} - 1,{N_2} - 2}^\beta }&{\bs{\Lambda} _{{N_2} - 1}^{\alpha ,\beta }}
\end{array}} \right),
\end{equation}
where\\
\scalebox{0.8}{\parbox{\linewidth}{%
\begin{equation}
\bs{\Lambda} _k^{\alpha ,\beta } = \left( {\begin{array}{*{20}{c}}
{a_{1,k}\,{}_{L}^E\bsC{D}^{\alpha}_{N_G,1,1} + b_{1,k}\,{}_{L}^E\bsC{D}^{\beta}_{N_G,k,k}}&{a_{1,k}\,{}_{L}^E\bsC{D}^{\alpha}_{N_G,1,2}}& a_{1,k}\,{}_{L}^E\bsC{D}^{\alpha}_{N_G,1,3} & \cdots & {a_{1,k}\,{}_{L}^E\bsC{D}^{\alpha}_{N_G,1,N_1-1}}\\
{a_{2,k}\,{}_{L}^E\bsC{D}^{\alpha}_{N_G,2,1}}&{a_{2,k}\,{}_{L}^E\bsC{D}^{\alpha}_{N_G,2,2} + b_{2,k}\,{}_{L}^E\bsC{D}^{\beta}_{N_G,k,k}}&{a_{2,k}\,{}_{L}^E\bsC{D}^{\alpha}_{N_G,2,3}}& \cdots &{a_{2,k}\,{}_{L}^E\bsC{D}^{\alpha}_{N_G,2,N_1-1}}\\
 a_{3,k}\,{}_{L}^E\bsC{D}^{\alpha}_{N_G,3,1} &{a_{3,k}\,{}_{L}^E\bsC{D}^{\alpha}_{N_G,3,2}}&{\ddots}& \ddots & \vdots \\
 \vdots &{\ddots}&{\ddots}&{\ddots}&{a_{N_1-2,k}\,{}_{L}^E\bsC{D}^{\alpha}_{N_G,N_1-2,N_1-1}}\\
{a_{N_1-1,k}\,{}_{L}^E\bsC{D}^{\alpha}_{N_G,N_1-1,1}}&{a_{N_1-1,k}\,{}_{L}^E\bsC{D}^{\alpha}_{N_G,N_1-1,2}}&{\ldots}&{a_{N_1-1,k}\,{}_{L}^E\bsC{D}^{\alpha}_{N_G,N_1-1,N_1-2}}&{a_{N_1-1,k}\,{}_{L}^E\bsC{D}^{\alpha}_{N_G,N_1-1,N_1-1} + b_{N_1-1,k}\,{}_{L}^E\bsC{D}^{\beta}_{N_G,k,k}}
\end{array}} \right),
\end{equation}
}}\\

and
\begin{equation}
\bs{\Upsilon} _{k,s}^\beta  = \left( {\begin{array}{*{20}{c}}
{b_{1,k}\,{}_{L}^E\bsC{D}^{\beta}_{N_G,k,s}}&0& \cdots &0\\
0&{b_{2,k}\,{}_{L}^E\bsC{D}^{\beta}_{N_G,k,s}}& \ddots & \vdots \\
 \vdots & \ddots & \ddots &0\\
0& \cdots &0&{b_{N_1-1,k}\,{}_{L}^E\bsC{D}^{\beta}_{N_G,k,s}}
\end{array}} \right)\quad \forall \{k,s\} \subset \MBN_{N_2-1}.
\end{equation}
We can solve the linear system \eqref{eq:updatesys1} for the approximate solution values $\tilde u_{1:N_1-1,1:N_2-1}$ by a direct solver
through (a variant of) Gauss elimination for sufficiently smooth variable coefficients, source functions, and initial value functions owing to the exponential convergence of the FGPS approximations, as we shall demonstrate later in Section \ref{sec:NS1}. We can further estimate the approximate solution function $I_{N_1,N_2}\tilde u(x,t)$ at any point $(x,t) \in \MBR^2$ using the following Fourier interpolation formula in Lagrange form:
\begin{equation}\label{eq:627}
I_{N_1,N_2}\tilde u(x,t) = \sum\limits_{l = 0}^{N_1-1} {\sum\limits_{j = 0}^{N_2-1} {\tilde u_{l,j} \C{F}_{l,j}^{x,t}(x,t)}},
\end{equation}
where $\C{F}_{l,j}^{x,t}(x,t) = {\C{F}}_l^x(x) {\C{F}}_j^t(t)$ is the tensor product trigonometric Lagrange interpolating polynomial such that\\ 
\scalebox{0.95}{\parbox{\linewidth}{%
\begin{align}
&{\C{F}}_l^x(x) = \frac{1}{N_1}\sumd\sum\limits_{\left| k \right| \le N_1/2} {e^{i {\omega^x _k}(x - {x_{N_1,l}})}} = \left\{ \begin{array}{l}
1,\quad x = x_{N_1,l},\nonumber\\
\frac{1}{N_1}\sin(N_1 \nu^x_l)\cot({\nu^x_l}),\quad x \ne x_{N_1,l},
\end{array} \right.:\quad \omega^x_k = \frac{2 \pi k}{T_1}\,\forall k \in \MFK'_{N_1},\quad \nu^x_l = \frac{\pi}{T_1} \left( {x - {x_{N_1,l}}} \right)\quad \forall l \in \MBJ_{N_1},\\
&{\C{F}}_j^t(t) = \frac{1}{N_2}\sumd\sum\limits_{\left| k \right| \le N_2/2} {e^{i {\omega^t _k}(t - {t_{N_2,j}})}} = \left\{ \begin{array}{l}
1,\quad t = t_{N_2,j},\nonumber\\
\frac{1}{N_2}\sin(N_2 \nu^t_j)\cot({\nu^t_j}),\quad t \ne t_{N_2,j},
\end{array} \right.:\quad \omega^t_k = \frac{2 \pi k}{T_2}\,\forall k \in \MFK'_{N_2},\quad \nu^t_j = \frac{\pi}{T_2} \left( {t - {t_{N_2,j}}} \right)\quad \forall j \in \MBJ_{N_2}.
\end{align}
}}

\section{Computational Storage}
\label{sec:CS1}
In this section, we briefly discuss the computational storage necessary to set up the linear system \eqref{eq:updatesys1}. We determine first the computational storage required by the $\gamma$th-order FGPSFDM with index $L$, ${}_{L}^E\bsC{D}^{\gamma}_{N_G}$, which follows from the following corollary.

\begin{cor}\label{cor:Toeplitz1}
The $\gamma$th-order FGPSFDM ${}_{L}^E\bsC{D}^{\gamma}_{N_G}$ is a Toeplitz matrix.
\end{cor}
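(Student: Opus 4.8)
The plan is to unfold the definition of ${}_{L}^E\bsC{D}^{\gamma}_{N_G}$ into its two ingredients and to observe that neither disturbs the constant-diagonal structure. By \eqref{eq:reqpr1}, each entry ${}_{L}^E\C{D}^{\gamma}_{N_G,r,s}$ is the corresponding FGPSQ entry ${}_{L}^E\C{Q}^{\gamma}_{N_G,r,s}$ scaled by the single constant $L^{1-\gamma}/\Gamma(2-\gamma)$, which is independent of $(r,s)$; hence it suffices to show that the matrix ${}_{L}^E\bsC{Q}^{\gamma}_{N_G}$ of entries ${}_{L}^E\C{Q}^{\gamma}_{N_G,r,s}$ depends on $r$ and $s$ only through $r-s$. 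To this end I recall, from \cite[Formula (4.8)]{elgindy2023fourier} together with \eqref{eq:RedElgPMFCD1}, how this quadrature is assembled: since $\gamma\in(0,1)$ we have $m=\lceil\gamma\rceil=1$, so $\ED{L}{t}{\gamma}{}$ acts on the first derivative only; replacing a function by its $n$-node trigonometric Lagrange interpolant $\sum_{k\in\MBJ_n} f_k\,\C{F}^t_k$, evaluating the reduced operator at the collocation node $t_{n,r}=Tr/n$, and discretizing the remaining integral $\C{I}_1^{(y)}[\,\cdot\,]$ by the $(N_G{+}1)$-point shifted Gegenbauer--Gauss rule on $[0,1]$ with abscissae $\hz^{\lambda}_{N_G,0:N_G}$ and weights $\varpi^{\lambda}_{N_G,0:N_G}$ yields
\[
{}_{L}^E\C{Q}^{\gamma}_{N_G,r,s} = \sum_{i=0}^{N_G} \varpi^{\lambda}_{N_G,i}\,\bigl(\C{F}^t_s\bigr)'\!\Bigl(t_{n,r} - L\,\bigl(\hz^{\lambda}_{N_G,i}\bigr)^{1/(1-\gamma)}\Bigr),
\]
in which the abscissae and weights are fixed by $N_G$ and the Gegenbauer index $\lambda$ and carry no $(r,s)$-dependence.

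The crux is the translation invariance of the trigonometric cardinal functions. From the closed forms displayed after \eqref{eq:627}, $\C{F}^t_s(t)$ depends on $t$ and $t_{n,s}$ only through $\nu^t_s=\frac{\pi}{T}(t-t_{n,s})$, so that $\C{F}^t_s(t)=\C{F}^t_0(t-t_{n,s})$ and hence $\bigl(\C{F}^t_s\bigr)'(t)=\bigl(\C{F}^t_0\bigr)'(t-t_{n,s})$ for every $s\in\MBJ_n$; the node value $\C{F}^t_s(t_{n,s})=1$ is subsumed in the same closed form, so no separate case is needed. Substituting this and $t_{n,r}-t_{n,s}=\frac{T}{n}(r-s)$ into the display gives
\[
{}_{L}^E\C{Q}^{\gamma}_{N_G,r,s} = \sum_{i=0}^{N_G} \varpi^{\lambda}_{N_G,i}\,\bigl(\C{F}^t_0\bigr)'\!\Bigl(\tfrac{T}{n}(r-s) - L\,\bigl(\hz^{\lambda}_{N_G,i}\bigr)^{1/(1-\gamma)}\Bigr),
\]
whose right-hand side is a function of $r-s$ alone. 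Therefore ${}_{L}^E\bsC{Q}^{\gamma}_{N_G}$, and with it ${}_{L}^E\bsC{D}^{\gamma}_{N_G}=\tfrac{L^{1-\gamma}}{\Gamma(2-\gamma)}\,{}_{L}^E\bsC{Q}^{\gamma}_{N_G}$, is constant along each of its diagonals, i.e., Toeplitz.

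The only real obstacle is bookkeeping: one has to pin down \cite[Formula (4.8)]{elgindy2023fourier} precisely enough to justify the first display and to confirm that every ingredient other than the shift $t_{n,r}$ --- the scalar prefactor, the Gegenbauer abscissae $(\hz^{\lambda}_{N_G,i})^{1/(1-\gamma)}$, and the quadrature weights $\varpi^{\lambda}_{N_G,i}$ --- is independent of the row and column indices; once this is in place the conclusion is immediate. I would close with a remark that, because $\bigl(\C{F}^t_0\bigr)'$ inherits $T$-periodicity from $\C{F}^t_0$, the entry in fact depends only on $(r-s)\bmod n$, so ${}_{L}^E\bsC{D}^{\gamma}_{N_G}$ is actually circulant; the weaker Toeplitz property stated in the corollary is all that the ensuing storage estimate requires.
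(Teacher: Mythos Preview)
Your argument is correct. The paper's own proof is much shorter: it simply invokes \cite[Theorem 4.1]{elgindy2023fourier}, which already establishes that ${}_{L}^E\bsC{Q}^{\gamma}_{N_G}$ is Toeplitz, and then observes from \eqref{eq:reqpr1} that ${}_{L}^E\bsC{D}^{\gamma}_{N_G}$ is a scalar multiple of ${}_{L}^E\bsC{Q}^{\gamma}_{N_G}$, hence Toeplitz as well. You instead reprove the Toeplitz property of ${}_{L}^E\bsC{Q}^{\gamma}_{N_G}$ from scratch via the translation invariance $\C{F}^t_s(t)=\C{F}^t_0(t-t_{n,s})$ of the trigonometric cardinal functions, which is presumably the content of the cited external theorem. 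What your approach buys is self-containment --- the reader need not consult \cite{elgindy2023fourier} --- and the sharper circulant conclusion you note at the end; what the paper's approach buys is brevity, since the heavy lifting has already been done elsewhere. Your caveat about ``bookkeeping'' is apt: the only point requiring care is that the displayed quadrature formula for ${}_{L}^E\C{Q}^{\gamma}_{N_G,r,s}$ faithfully reflects \cite[Formula (4.8)]{elgindy2023fourier}, but once that is granted the rest is immediate.
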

\begin{proof}
Let ${}_{L}^E\bsC{Q}^{\gamma}_{N_G}$ be the $\gamma$th-order Fourier-Gegenbauer-based PS integration matrix with index $L$ defined by
\begin{equation}
{}_{L}^E\bsC{Q}^{\gamma}_{N_G} = \left[{}_{L}^E\bsC{Q}^{\gamma}_{N_G,0}; \ldots; {}_{L}^E\bsC{Q}^{\gamma}_{N_G,N-1}\right]:\quad {}_{L}^E\bsC{Q}^{\gamma}_{N_G,l} = \left[{}_{L}^E\C{Q}^{\gamma}_{N_G,l,0}, \ldots, {}_{L}^E\C{Q}^{\gamma}_{N_G,l,N-1}\right]\quad \forall l \in \MBJ_N.
\end{equation}
The proof follows immediately by realizing that ${}_{L}^E\bsC{Q}^{\gamma}_{N_G}$ is a Toeplitz matrix as proven by \cite[Theorem 4.1]{elgindy2023fourier}, and 
\begin{equation}
{}_{L}^E\bsC{D}^{\gamma}_{N_G} = \frac{L^{1-\gamma}}{\Gamma(2-\gamma)} {}_{L}^E\bsC{Q}^{\gamma}_{N_G}\quad \forall \gamma \in (0,1),
\end{equation}
by Eq. \eqref{eq:reqpr1}.
\end{proof}
Since ${}_{L}^E\bsC{D}^{\gamma}_{N_G}$ is a square matrix of size $N$, Corollary \ref{cor:Toeplitz1} manifests that ${}_{L}^E\bsC{D}^{\gamma}_{N_G}$ requires only $2N - 1$ storage. Therefore, one can solve Toeplitz systems of the form ${}_{L}^E\bsC{D}^{\gamma}_{N_G} \bmx = \bmb\,\foralls \{\bmx, \bmb\} \subset \MBR^N$ using only $O(N^2)$ flops and $O(N)$ storage with the aid of special fast algorithms such as the Levinson-Durbin algorithm \cite{bjorck2008numerical}. The $\gamma$th-order FGPSFDM with index $L$, ${}_{L}^E\bsC{D}^{\gamma}_{N_G}$, is a constant matrix that can be constructed and stored offline for a certain range of its parameters and invoked later when running the numerical program. If we now turn our attention to the global coefficient matrix $\F{A}$ of the linear system \eqref{eq:updatesys1}, we can clearly see from Section \ref{sec:FGPM1} that it is a block matrix with $N_2-1$ dense diagonal square matrices and $(N_1 - 2) (N_2 - 1)$ diagonal matrices, each of size $N_1-1$. Hence, the matrix $\F{A}$ requires $(N_2-1) (2N_1^2-5 N_1+3)$ storage. When we add this to the $(N_1-1) (N_2-1)$ storage requirements of the column vector $\bm{F}$, we find out that the total storage requirements of the linear system \eqref{eq:updatesys1} is $2\,(N_1 -1)^2 \,(N_2 -1)$. In the special case when $N_1 = N_2 = N$, the computational storage requirement of the linear system simplifies into $2 (N-1)^3$. We shall discover in the next section that the proposed method can often achieve superior accuracy when both $N_1$ and $N_2$ are relatively very small, thus, significantly reducing the computational storage requirement.

\section{Error and Convergence Analysis}
\label{sec:ECA1}
The following theorem underlines the truncation error bounds of the approximate linear system of equations \eqref{eq:1K1}.
\begin{thm}\label{thm:Jan212022}
Let $N_G \in \MBZP, L \in \MBRP, \gamma  = \min \left\{ {\alpha ,\beta } \right\}, (x,t) \in \MBOmega_{T_1,T_2}$, and suppose that ${{\C{F}'_l}^x}\left(x-L\,y^{\frac{1}{1-\alpha}}\right)$ and ${{\C{F}'_j}^t}\left(t-L\,y^{\frac{1}{1-\beta}}\right)$ are approximated by the SGG interpolants obtained through interpolation at the SGG points set $\MBSG_{1,N_G}^{\lambda}\,\forall (l,j) \in \MBJ_{N_1,N_2}$; cf. \cite{Elgindy20161,Elgindy2023a,elgindy2023fourier}. If \begin{equation}\label{eq:tss1}
L > 1-\gamma,
\end{equation}
then the truncation error of the approximate linear system of equations \eqref{eq:1K1} at each collocation mesh grid is bounded by
\begin{equation}\label{eq:errhiheq1}
\left| {E_{l,j}^{\text{tr}}} \right| \le \frac{{2\mu {L^{1 - \gamma }}}}{{\Gamma (2 - \gamma )}}\left( {{N_{\max }}\left\|u\right\|_{\infty}{E^{{\text{fd}}}} + {E^{\text{di}}_{{N_1},{N_2}}}} \right),
\end{equation}
where 
$\mu  = \max \left\{ {{{\left\| a \right\|}_\infty },{{\left\| b \right\|}_\infty }} \right\}, {N_{\max }} = \max \left\{ {{N_{1:2}}} \right\}, E^{\text{fd}} = \max\left\{E_{N_1,N_G}^{\lambda,\alpha},E_{N_2,N_G}^{\lambda,\beta}\right\}: \foralle N \in \{N_1,N_2\}, \nu \in \{\alpha,\beta\}$,\\ $E_{N,N_G}^{\lambda,\nu} = \mathop {\max }\limits_{(k,s) \subset \MBJ_N}\left|E_{N,N_G,k}^{\lambda,\nu}(\zeta_{k,s})\right|:$\\
\scalebox{0.9}{\parbox{\linewidth}{%
\begin{equation}
	\left|{E_{N,N_G,k}^{\lambda,\nu}(\zeta_{k,s})}\right| \le  B_{2,N,N_G}^{L,\nu,\zeta_{k,s}}\,{2^{ - 2{N_G} - 1}}{{{e}}^{{N_G}}}{N_G}^{\lambda - {N_G} - \frac{3}{2}} {\left\{ \begin{array}{l}
	1,\quad {N_G} \ge 0 \wedge \lambda \ge 0,\\
	\displaystyle{\frac{{\Gamma \left( {\frac{{{N_G}}}{2} + 1} \right)\Gamma \left( {\lambda + \frac{1}{2}} \right)}}{{\sqrt \pi\,\Gamma \left( {\frac{{{N_G}}}{2} + \lambda + 1} \right)}}},\quad N_G \in \MBZOP \wedge  - \frac{1}{2} < \lambda < 0,\\
	\displaystyle{\frac{{2\Gamma \left( {\frac{{{N_G} + 3}}{2}} \right)\Gamma \left( {\lambda + \frac{1}{2}} \right)}}{{\sqrt \pi  \sqrt {\left( {{N_G} + 1} \right)\left( {{N_G} + 2\lambda + 1} \right)}\,\Gamma \left( {\frac{{{N_G} + 1}}{2} + \lambda} \right)}}},\quad N_G \in \MBZzereP \wedge  - \frac{1}{2} < \lambda < 0,\\
	B_1^{\lambda} {\left( {{N_G} + 1} \right)^{ - \lambda}},\quad {N_G} \to \infty  \wedge  - \frac{1}{2} < \lambda < 0,
	\end{array} \right.}\label{eq:wow1}
\end{equation}
}}\\
where $B_{2,N,N_G}^{L,\nu,\zeta_{k,s}} = {D^{\lambda}} {A_{N,N_G}^{L,\nu,\zeta_{k,s}}}:$\\ 
\scalebox{0.9}{\parbox{\linewidth}{%
\begin{equation}\label{eq:Remar1}
{A_{N,N_G}^{L,\nu,\zeta_{k,s}}} = N^{N_G+1} \zeta_{k,s}^{-(N_G+1)} \left(\frac{L}{1-\nu}\right)^{N_G+1} \gamma_{N_G}^{\nu} \left\{ \begin{array}{l}
c_1 \left[ {\left( {\begin{array}{*{20}{c}}
{{N_G} + 1}\\
{{N_G}/2}
\end{array}} \right){\delta _{\frac{{{N_G}}}{2},\left\lfloor {\frac{{{N_G}}}{2}} \right\rfloor }} + \left( {\begin{array}{*{20}{c}}
{{N_G} + 1}\\
{\left\lfloor {{N_G}/2} \right\rfloor }
\end{array}} \right)\left( {1 - {\delta _{\frac{{{N_G}}}{2},\left\lfloor {\frac{{{N_G}}}{2}} \right\rfloor }}} \right)} \right],\quad N_G \in \MBZP,\\
c_2 \frac{(2 e)^{N_G/2}}{\sqrt{N_G}},\quad N_G \in\,\oset{\rightarrow}{\MBZeP},\\
c_3 \frac{(e N_G)^{\left\lfloor {N_G/2} \right\rfloor}}{{\left( {\left\lfloor {N_G/2} \right\rfloor } \right)^{\left\lfloor {N_G/2} \right\rfloor  + 1/2}}},\quad N_G \in \,\oset{\rightarrow}{\MBZOP},
\end{array} \right.
\end{equation}
}}\\
$\foralls \{c_{1:3}\} \subset \MBRP, \{\zeta_{0:N-1,0:N-1}\} \subset (0,1), \lambda$-dependent constants $\{{D^{\lambda}},B_1^{\lambda}\} \subset \MBRP: B_1^{\lambda} > 1, \gamma_{N_G}^{\nu} = \sum\nolimits_{k \in {\MBJ_{{N_G} + 2}}} {\left| {{{\left( {\frac{\nu }{{1 - \nu }}} \right)}^{({N_G} - k + 1)}}} \right|}$, and ${E^{\text{di}}_{{N_1},{N_2}}}$ is the maximum of the absolute derivatives of Fourier interpolation errors of $u$ along the coordinate axes of the $xt$-plane and based on the mesh grid sets $S_{N_1}^{T_1}$ and $S_{N_2}^{T_2}$.   
\end{thm}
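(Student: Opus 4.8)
The plan is to read $E_{l,j}^{\text{tr}}$ as the defect left when the exact solution $u$ is substituted into \eqref{eq:1K1} with the initial‑value terms restored to the left‑hand side (using $u_{0,j}=h_j$ and $u_{l,0}=g_l$ from \eqref{eq:2}) and the exact collocated FPDE \eqref{eq:1} subtracted off; this yields
\begin{equation*}
E_{l,j}^{\text{tr}} = a_{l,j}\,\Delta_{l,j}^{x} + b_{l,j}\,\Delta_{l,j}^{t},\qquad \Delta_{l,j}^{x} := \ED{L}{x_{N_1,l}}{\alpha}{u(x,t_{N_2,j})} - \sum_{k=0}^{N_1-1}{}_{L}^E\C{D}^{\alpha}_{N_G,l,k}\,u_{k,j},
\end{equation*}
with $\Delta_{l,j}^{t}$ the analogous $t$‑direction defect ($\alpha,N_1,x$ replaced by $\beta,N_2,t$). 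Since $|a_{l,j}|\le\|a\|_\infty\le\mu$ and $|b_{l,j}|\le\|b\|_\infty\le\mu$, we have $|E_{l,j}^{\text{tr}}|\le\mu\,(|\Delta_{l,j}^{x}|+|\Delta_{l,j}^{t}|)$, so everything reduces to the directional estimate $|\Delta_{l,j}^{x}|\le\frac{L^{1-\alpha}}{\Gamma(2-\alpha)}\bigl(N_1\|u\|_\infty E_{N_1,N_G}^{\lambda,\alpha}+{E^{\text{di}}_{N_1,N_2}}\bigr)$ and its $t$‑counterpart: the bound \eqref{eq:errhiheq1} then follows on replacing $N_1,N_2$ by $N_{\max}$, replacing $E_{N_1,N_G}^{\lambda,\alpha}$ and $E_{N_2,N_G}^{\lambda,\beta}$ by $E^{\text{fd}}$, and dominating both prefactors $L^{1-\alpha}/\Gamma(2-\alpha)$, $L^{1-\beta}/\Gamma(2-\beta)$ by $L^{1-\gamma}/\Gamma(2-\gamma)$ with $\gamma=\min\{\alpha,\beta\}$ (the point at which the sufficiency condition \eqref{eq:tss1} is used), whence the factor $2$.

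For the directional estimate I would introduce the $x$‑trigonometric interpolant $q_j:=\sum_{k=0}^{N_1-1}u_{k,j}\,\C{F}_k^{x}$ of $u(\cdot,t_{N_2,j})$, set $\eta_j:=u(\cdot,t_{N_2,j})-q_j$, and split $\Delta_{l,j}^{x}=\delta^{\text{di}}_{l,j}+\delta^{\text{fd}}_{l,j}$ with interpolation defect $\delta^{\text{di}}_{l,j}:=\ED{L}{x_{N_1,l}}{\alpha}{\eta_j(x)}$ and quadrature defect $\delta^{\text{fd}}_{l,j}:=\sum_{k=0}^{N_1-1}u_{k,j}\,\bigl(\ED{L}{x_{N_1,l}}{\alpha}{\C{F}_k^{x}}-{}_{L}^E\C{D}^{\alpha}_{N_G,l,k}\bigr)$. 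Specialising \eqref{eq:RedElgPMFCD1} to $m=1$ (so $y^{(m-1)/(1-\alpha)}=1$ and $(1-\alpha)\Gamma(1-\alpha)=\Gamma(2-\alpha)$) gives $\ED{L}{x}{\alpha}{v(x)}=\frac{L^{1-\alpha}}{\Gamma(2-\alpha)}\C{I}_1^{(y)}v'\cancbra{x-L\,y^{\frac{1}{1-\alpha}}}$, so by linearity $\delta^{\text{di}}_{l,j}=\frac{L^{1-\alpha}}{\Gamma(2-\alpha)}\C{I}_1^{(y)}\eta_j'\cancbra{x_{N_1,l}-L\,y^{\frac{1}{1-\alpha}}}$; since $\|\eta_j'\|_\infty\le{E^{\text{di}}_{N_1,N_2}}$ by the defining property of ${E^{\text{di}}_{N_1,N_2}}$, and $\C{I}_1^{(y)}1=1$, we obtain $|\delta^{\text{di}}_{l,j}|\le\frac{L^{1-\alpha}}{\Gamma(2-\alpha)}{E^{\text{di}}_{N_1,N_2}}$. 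For $\delta^{\text{fd}}_{l,j}$, \eqref{eq:reqpr1} gives ${}_{L}^E\C{D}^{\alpha}_{N_G,l,k}=\frac{L^{1-\alpha}}{\Gamma(2-\alpha)}{}_{L}^E\C{Q}^{\alpha}_{N_G,l,k}$, and by \eqref{eq:RedElgPMFCD101app2} and \cite[Formula (4.8)]{elgindy2023fourier}, ${}_{L}^E\C{Q}^{\alpha}_{N_G,l,k}$ equals the integral over $[0,1]$ of the SGG interpolant (at the nodes $\MBSG_{1,N_G}^{\lambda}$) of $y\mapsto{\C{F}'_k}^{x}(x_{N_1,l}-L\,y^{\frac{1}{1-\alpha}})$, while $\ED{L}{x_{N_1,l}}{\alpha}{\C{F}_k^{x}}=\frac{L^{1-\alpha}}{\Gamma(2-\alpha)}\C{I}_1^{(y)}{\C{F}'_k}^{x}\cancbra{x_{N_1,l}-L\,y^{\frac{1}{1-\alpha}}}$; writing $\varepsilon_{l,k}(y)$ for the SGG interpolation error of that map, $|\ED{L}{x_{N_1,l}}{\alpha}{\C{F}_k^{x}}-{}_{L}^E\C{D}^{\alpha}_{N_G,l,k}|=\frac{L^{1-\alpha}}{\Gamma(2-\alpha)}|\C{I}_1^{(y)}\varepsilon_{l,k}|\le\frac{L^{1-\alpha}}{\Gamma(2-\alpha)}\|\varepsilon_{l,k}\|_\infty\le\frac{L^{1-\alpha}}{\Gamma(2-\alpha)}E_{N_1,N_G}^{\lambda,\alpha}$. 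Summing over the $N_1$ indices $k$ with $|u_{k,j}|\le\|u\|_\infty$ yields $|\delta^{\text{fd}}_{l,j}|\le\frac{L^{1-\alpha}}{\Gamma(2-\alpha)}N_1\|u\|_\infty E_{N_1,N_G}^{\lambda,\alpha}$, and adding the two defects closes the directional estimate; the $t$‑direction is identical with $\alpha,N_1$ replaced by $\beta,N_2$.

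The remaining and most laborious ingredient is the explicit bound \eqref{eq:wow1}--\eqref{eq:Remar1} on the SGG interpolation error $E_{N,N_G,k}^{\lambda,\nu}(\zeta_{k,s})$. I would invoke the classical interpolation remainder at the SGG nodes $\MBSG_{1,N_G}^{\lambda}$: the error of the $\varepsilon_{l,k}$‑type maps equals $\bigl((N_G+1)!\bigr)^{-1}$ times the $(N_G+1)$st $y$‑derivative of $y\mapsto{\C{F}'_k}^{x}(\,\cdot-L\,y^{\frac{1}{1-\nu}})$ at an interior point $\zeta_{k,s}\in(0,1)$, times the node product $\prod_{i=0}^{N_G}(y-\hz_{N_G,i}^{\lambda})$. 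The node product is controlled by the known sup‑norm estimates for Gegenbauer polynomials on $[0,1]$, which produce the case split on the sign of $\lambda$ and the parity of $N_G$ and the factor $2^{-2N_G-1}\,e^{N_G}\,N_G^{\lambda-N_G-3/2}$ in \eqref{eq:wow1}, cf.\ \cite{Elgindy2013b,elgindy2018high,elgindy2018optimal}; and the composite $(N_G+1)$st derivative is estimated by Fa\`{a} di Bruno's formula: the outer trigonometric factor ${\C{F}'_k}^{x}$ carries wavenumbers $O(N)$ and so contributes $N^{N_G+1}$, while the inner map $y\mapsto L\,y^{\frac{1}{1-\nu}}$, whose $j$th derivative is $\frac{L}{1-\nu}\left(\frac{\nu}{1-\nu}\right)^{(j-1)}y^{\frac{\nu}{1-\nu}-(j-1)}$, contributes $(L/(1-\nu))^{N_G+1}$, the inverse power $\zeta_{k,s}^{-(N_G+1)}$ from evaluation at the interior point, the falling‑factorial sum $\gamma_{N_G}^{\nu}$, and the combinatorial terms of \eqref{eq:Remar1}; this is precisely the content of \cite[Theorem 4.1]{elgindy2023fourier}, which I would cite rather than reprove. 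The principal obstacle is therefore this composite‑derivative estimate and its packaging into \eqref{eq:Remar1}; a secondary point requiring care is verifying, from \eqref{eq:tss1}, that $L^{1-\nu}/\Gamma(2-\nu)\le L^{1-\gamma}/\Gamma(2-\gamma)$ for $\nu\in\{\alpha,\beta\}$, after which the assembly of the first paragraph finishes the argument, the remaining steps being only linearity of $\ED{L}{x}{\alpha}{}$, the triangle inequality, and the identity $\C{I}_1^{(y)}1=1$.
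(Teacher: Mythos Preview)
Your proposal is correct and follows essentially the same approach as the paper's proof: split the truncation error into $x$- and $t$-directional defects, decompose each into a Fourier-interpolation contribution and a Gegenbauer-quadrature contribution, bound the latter via the FGPSQ error estimates from \cite{elgindy2023fourier}, and use the monotonicity of $\nu\mapsto L^{1-\nu}/\Gamma(2-\nu)$ (which is where \eqref{eq:tss1} enters, via \cite[Theorem~5.3]{elgindy2023fourier}) to pass to $\gamma=\min\{\alpha,\beta\}$. Two minor remarks: the paper obtains \eqref{eq:wow1}--\eqref{eq:Remar1} by citing \cite[Theorem~5.2]{elgindy2023fourier} rather than Theorem~4.1, and there $E_{N,N_G,k}^{\lambda,\nu}(\zeta_{k,s})$ is already the integrated quadrature error (i.e., $\C{I}_1^{(y)}\varepsilon_{l,k}$ itself), so your intermediate step $|\C{I}_1^{(y)}\varepsilon_{l,k}|\le\|\varepsilon_{l,k}\|_\infty$ is superfluous.
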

\begin{proof}
First notice that ${L^{1 - \gamma }}/{\Gamma (2 - \gamma )}$ is monotonically decreasing on $\MBOmega_1$ such that 
\[\gamma = \mathop {\argmax}\limits_{\nu} \frac{L^{1 - \nu}}{\Gamma (2 - \nu)},\]
cf. \cite[Theorem 5.3]{elgindy2023fourier}. Let ${I_{N_1}}u(x,t_{N_2,j}) = {\C{F}}_{0:N_1-1}^x\cancbra{x}\, u_{x_{N_1,0:N_1-1},t_{N_2,j}}$ be the $N_1/2$-degree, $T_1$-periodic Fourier interpolant that matches $u(x,t_{N_2,j})$ at the mesh points set $\MBS_{N_1}^{T_1}\,\foralle j \in \MBJ_{N_2}$. Let also ${}_xE^{\text{int}}_{N_1,j}$ denote the associated interpolation error such that
\begin{equation}
u(x,t_{N_2,j}) = I_{N_1}u(x,t_{N_2,j}) + {}_xE^{\text{int}}_{N_1,j}(x)\quad \forall j \in \MBJ_{N_2}.
\end{equation}
Taking the periodic FD operator $\ED{L}{x}{\alpha}{}$ of both sides of the equation and following the work of \citet{elgindy2023fourier} to numerically compute the FD at each spatial mesh grid point yield
\begin{align}
\ED{L}{x_{N_1,l}}{\alpha}{u(x,t_{N_2,j})} &= \frac{L^{1-\alpha}}{\Gamma(2-\alpha)} \left(\C{I}_1^{(y)}{\C{F}'}_{0:N_1-1}^x\cancbra{x_{N_1,l}-Ly^{\frac{1}{1-\alpha}}}\, u_{x_{N_1,0:N_1-1},t_{N_2,j}} + \C{I}_1^{(y)}{{}_xE'^{\text{int}}_{N_1,j}}\cancbra{x_{N_1,l}-Ly^{\frac{1}{1-\alpha}}}\right)\\
&= \frac{L^{1-\alpha}}{\Gamma(2-\alpha)} \left[\sum_{k \in \MBJ_{N_1}} {u(x_{N_1,k},t_{N_2,j}) \left({}_{L}^E\C{Q}^{\alpha}_{N_G,l,k} + E_{N_1,N_G,k}^{\lambda,\alpha}\left(\zeta_{k,l}\right)\right)} + \C{I}_1^{(y)}{{}_xE'^{\text{int}}_{N_1,j}}\cancbra{x_{N_1,l}-Ly^{\frac{1}{1-\alpha}}}\right],\label{eq:helkk1}
\end{align}
$\foralls \{\zeta_{0:N_1-1,l}\} \subset (0,1)$, where $E_{N_1,N_G,k}^{\lambda,\alpha}$ is the FGPSQ error as defined by \cite[Theorem 1]{elgindy2023fourier} $\forall k \in \MBJ_{N_1}$. By rearranging the terms in Eq. \eqref{eq:helkk1} we obtain
\begin{align}
\ED{L}{x_{N_1,l}}{\alpha}{u(x,t_{N_2,j})} &= {}_{L}^E\bsC{D}^{\alpha}_{N_G,l}\, u_{x_{N_1,0:N_1-1},t_{N_2,j}} + \frac{L^{1-\alpha}}{\Gamma(2-\alpha)} \left[\sum_{k \in \MBJ_{N_1}} {u(x_{N_1,k},t_{N_2,j}) E_{N_1,N_G,k}^{\lambda,\alpha}\left(\zeta_{k,l}\right)} + \C{I}_1^{(y)}{{}_xE'^{\text{int}}_{N_1,j}}\cancbra{x_{N_1,l}-Ly^{\frac{1}{1-\alpha}}}\right].
\end{align}
Therefore,
\begin{subequations}
\begin{equation}\label{eq:res1}
\left|\ED{L}{x_{N_1,l}}{\alpha}{u(x,t_{N_2,j})} - {}_{L}^E\bsC{D}^{\alpha}_{N_G,l}\, u_{x_{N_1,0:N_1-1},t_{N_2,j}}\right| \le \frac{L^{1-\alpha}}{\Gamma(2-\alpha)} \left[N_1 \left\|u\right\|_{\infty} E_{N_1,N_G}^{\lambda,\alpha} + \left\|{}_xE'^{\text{int}}_{N_1}\right\|_{\infty}\right],
\end{equation}
where $\left\|{}_xE'^{\text{int}}_{N_1}\right\|_{\infty} = \mathop {\max }\limits_{j \in \MBJ_{N_2}} \left\|{}_xE'^{\text{int}}_{N_1,j}\right\|_{\infty}$. By similarity, we can readily show that 
\begin{equation}\label{eq:res2}
\left|\ED{L}{t_{N_2,j}}{\beta}{u(x_{N_1,l},t)} - {}_{L}^E\bsC{D}^{\beta}_{N_G,j}\, u_{x_{N_1,l},t_{N_2,0:N_2-1}}\right| \le \frac{L^{1-\beta}}{\Gamma(2-\beta)} \left[N_2 \left\|u\right\|_{\infty} E_{N_2,N_G}^{\lambda,\beta} + \left\|{}_tE'^{\text{int}}_{N_2}\right\|_{\infty}\right],
\end{equation}
\end{subequations}
where $\left\|{}_tE'^{\text{int}}_{N_2}\right\|_{\infty} = \mathop {\max }\limits_{l \in \MBJ_{N_1}} \left\|{}_tE'^{\text{int}}_{N_2,l}\right\|_{\infty}$, and ${}_tE^{\text{int}}_{N_2,l}$ is the associated interpolation error of $u(x_{N_1,l},t)$ based on the mesh points set $S_{N_2}^{T_2}\,\forall l \in \MBJ_{N_1}$. Now, let ${E^{\text{di}}_{{N_1},{N_2}}} = \max\left\{\left\|{}_xE'^{\text{int}}_{N_1}\right\|_{\infty}, \left\|{}_tE'^{\text{int}}_{N_2}\right\|_{\infty}\right\}$. The proof is established by subtracting Eqs. \eqref{eq:1} and \eqref{eq:1K1}, imposing Ineqs. \eqref{eq:res1} and \eqref{eq:res2}, and applying \cite[Theorem 5.2]{elgindy2023fourier}.
\end{proof}
The first term in the sum of Ineq. \eqref{eq:errhiheq1} is due to the FGPS approximations to the FDs of $u$, hence can be considered pure FD computational (FDC) error. The second term is due to the Fourier interpolations of $u$ at the collocation mesh grid and thus can be viewed as a pure Fourier-interpolation-induced (FII) error. It is interesting to note that increasing the maximum degree of Fourier interpolants of $u$ generally decreases the FII error, but increases the FDC error. This observation agrees with the recent findings of \cite{elgindy2023fourier} on FDC errors based on FGPS approximations. Fortunately, both FDC and FII errors can be made very small for sufficiently smooth periodic solutions using relatively small mesh grids, owing to the rapid convergence of FGPS approximations and Fourier interpolation.

\begin{rem}
The reader should realize that the FGPS approximations of the periodic FDs converge exponentially fast for sufficiently smooth periodic functions, as
$N_G \to \infty$, while holding all other parameters fixed; cf. \cite[Theorem 5.2 and the paragraph that follows]{elgindy2023fourier}.
\end{rem}

The following corollary is a direct result of \cite[Corollary 5.1]{Elgindy2023b}, which states that the error of the FGPS method is dominated by the FDC error when the solution of the FPDE is ``$\beta$-analytic'' owing to the collapse of the FII error; cf. \cite{Elgindy2023a}.
\begin{cor}\label{cor:1}
Let the assumptions of Theorem \ref{thm:Jan212022} hold. If $u$ is $\beta$-analytic with respect to both arguments, then ${E^{\text{di}}_{{N_1},{N_2}}}$ collapses, and Ineq. \eqref{eq:errhiheq1} reduces to
\begin{equation}\label{eq:errhiheq1mz1}
\left| {E_{l,j}^{\text{tr}}} \right| \le \frac{{2 \mu {N_{\max }} {L^{1 - \gamma }} \left\|u\right\|_{\infty}}}{{\Gamma (2 - \gamma )}} {{E^{{\text{fd}}}}}.
\end{equation}
\end{cor}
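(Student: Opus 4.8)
The plan is a short two-move reduction. I would first show that the hypothesis of $\beta$-analyticity forces the Fourier-interpolation-induced (FII) contribution $E^{\text{di}}_{N_1,N_2}$ appearing in the bound \eqref{eq:errhiheq1} of Theorem \ref{thm:Jan212022} to vanish, and then substitute $E^{\text{di}}_{N_1,N_2}=0$ into that bound. For the first move, recall from the proof of Theorem \ref{thm:Jan212022} that $E^{\text{di}}_{N_1,N_2}$ is the larger of $\left\|{}_xE'^{\text{int}}_{N_1}\right\|_{\infty}$ and $\left\|{}_tE'^{\text{int}}_{N_2}\right\|_{\infty}$, i.e. the maximum of the sup-norms of the first derivatives of the $T_1$-periodic Fourier interpolation error of the sections $x\mapsto u(x,t_{N_2,j})$ on $\MBS_{N_1}^{T_1}$ (over $j\in\MBJ_{N_2}$) and of the $T_2$-periodic Fourier interpolation error of the sections $t\mapsto u(x_{N_1,l},t)$ on $\MBS_{N_2}^{T_2}$ (over $l\in\MBJ_{N_1}$). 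Since $u$ is $\beta$-analytic with respect to both of its arguments, each such univariate section is itself $\beta$-analytic, so \cite[Corollary 5.1]{Elgindy2023b} applies (cf. also \cite{Elgindy2023a}): for a $\beta$-analytic periodic function the Fourier interpolation error together with its derivatives collapses. Hence both $\left\|{}_xE'^{\text{int}}_{N_1}\right\|_{\infty}$ and $\left\|{}_tE'^{\text{int}}_{N_2}\right\|_{\infty}$ are zero, and therefore $E^{\text{di}}_{N_1,N_2}=0$.

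For the second move I would simply set $E^{\text{di}}_{N_1,N_2}=0$ in \eqref{eq:errhiheq1}, which deletes the second summand inside the parenthesis and leaves
\[
\left|E_{l,j}^{\text{tr}}\right| \le \frac{2\mu L^{1-\gamma}}{\Gamma(2-\gamma)}\,N_{\max}\left\|u\right\|_{\infty}E^{\text{fd}};
\]
absorbing the factors $N_{\max}$ and $\left\|u\right\|_{\infty}$ into the numerator reproduces \eqref{eq:errhiheq1mz1} verbatim, with $\mu$, $\gamma$, $N_{\max}$, and $E^{\text{fd}}$ retaining exactly the meanings assigned to them in Theorem \ref{thm:Jan212022}.

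I do not anticipate any genuine obstacle, as the statement is essentially a specialization of Theorem \ref{thm:Jan212022}. The only points that warrant a line of justification are (i) that the cited collapse result is available for — or trivially extends to — the \emph{derivative} of the Fourier interpolation error and not merely the error itself, and (ii) that $\beta$-analyticity of $u$ in both variables transfers to the fixed-line sections $u(\cdot,t_{N_2,j})$ and $u(x_{N_1,l},\cdot)$; both follow immediately from the definition of $\beta$-analyticity in \cite{Elgindy2023a,Elgindy2023b}. No re-estimation of the FD-computational term is required, since that term is untouched by the hypothesis and carries over unchanged from \eqref{eq:errhiheq1}.
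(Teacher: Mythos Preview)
Your proposal is correct and follows essentially the same approach as the paper, which simply states that the corollary is a direct result of \cite[Corollary 5.1]{Elgindy2023b} (with reference to \cite{Elgindy2023a}) owing to the collapse of the FII error for $\beta$-analytic solutions. Your write-up is in fact more detailed than the paper's one-line justification, but the logical content is identical.
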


\begin{rem}
We draw the reader’s attention to the fact that $\beta$ in the expression ``$\beta$-analytic'' has nothing to do with the order of the FD, $\ED{L}{t}{\beta}{}$, often used in the presentation of the paper, but rather pertained to a type of analyticity of functions; the reader may consult \cite{Elgindy2023a} for more details.
\end{rem}

\section{Numerical Simulations}
\label{sec:NS1}
This section demonstrates the accuracy and performance of the proposed FGPS method on four novel test problems. All numerical experiments were carried out using MATLAB R2023a software installed on a personal laptop equipped with a 2.9 GHz AMD Ryzen 7 4800H CPU and 16 GB memory running on a 64-bit Windows 11 operating system. The linear system of equations \eqref{eq:updatesys1} was solved using MATLAB mldivide solver. The FGPS was performed in all cases using the parameter values $N_1 = N_2 = 4, L = 30, N_G = 1000$, and $\lambda = 0$, except when otherwise explicitly stated.\medskip

\textbf{Problem 1.} Consider the following time-dependent one-dimensional FPDE with variable coefficients and periodic solutions:
\begin{subequations}
\begin{equation}\label{eq:1new1}
x\,t\,\ED{L}{x}{1/2}{u(x,t)} + (x+t)\,\ED{L}{t}{1/2}{u(x,t)} = \frac{1}{2}\sqrt[4]{{ - 1}}\left[ {{e^{ - it}}\left( {t + x} \right)\sin \left( x \right)\left( {\erf\left( c \right){e^{2it}} - \erfi\left( c \right)} \right) - it{e^{ - ix}}x\cos \left( t \right)\left( {\erf\left( c \right){e^{2ix}} + \erfi\left( c \right)} \right)} \right],
\end{equation}
subject to the initial conditions
\begin{equation}\label{eq:2new1}
u(x,0) = \sin(x),\quad u(0,t) = 0,
\end{equation}
\end{subequations}
$\forall (x,t) \in \MBOmega_{2\pi, 2\pi}$, where $c = \sqrt {15} \left( {1 + i} \right)$. The exact solution of the problem is $u(x,t) = \sin x \cos t$. Figure \ref{fig:Fig1} shows the plots of the exact and approximate solution obtained by the FGPS method. The figure also shows the corresponding absolute error surface, which indicates that the solution is resolved to about machine precision. $\kappa(\F{A})$ and the elapsed time for running the FGPS method were about $12.615$ and $0.064$ s, respectively, rounded to three decimal digits. Figure \ref{fig:Fig12} demonstrates that one can still retain highly accurate approximations using relatively much smaller values of $N_G$. In addition, Figure \ref{fig:Fig13} supports our theoretical error and convergence analysis study discussed in Section \ref{sec:ECA1}, where we observe a decline in the approximations precision when increasing the degrees of Fourier interpolants of $u$ by $16$ orders due to the rise of the FDC error.

\begin{figure}[H]
\centering
\includegraphics[scale=0.4]{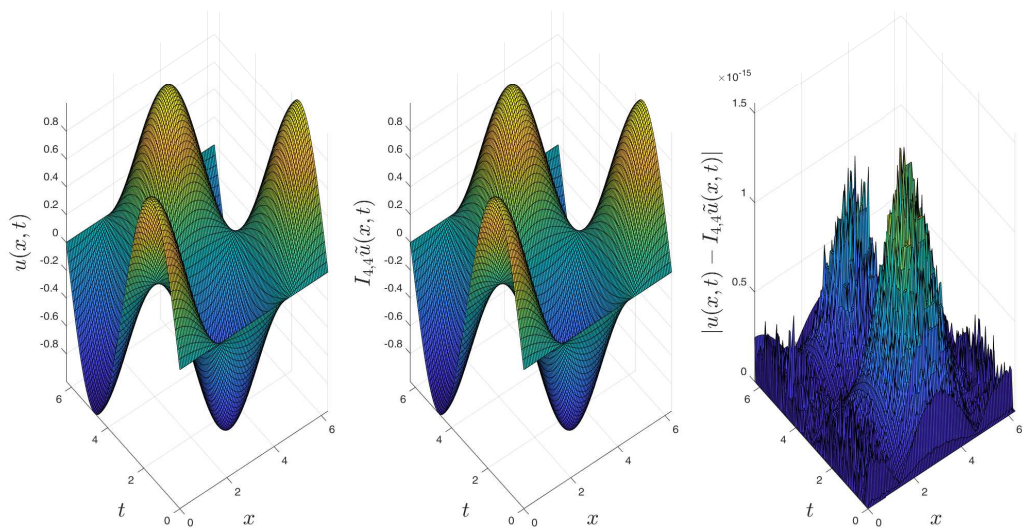}
\caption{The plots of the exact (left), approximate solution obtained by the FGPS method (middle), and the corresponding absolute errors (right) for Problem 1. The plots were generated using $100$ equally-spaced nodes in the $x$- and $t$-directions from $0$ to $2 \pi$.}
\label{fig:Fig1}
\end{figure}

\begin{figure}[H]
\centering
\includegraphics[scale=0.4]{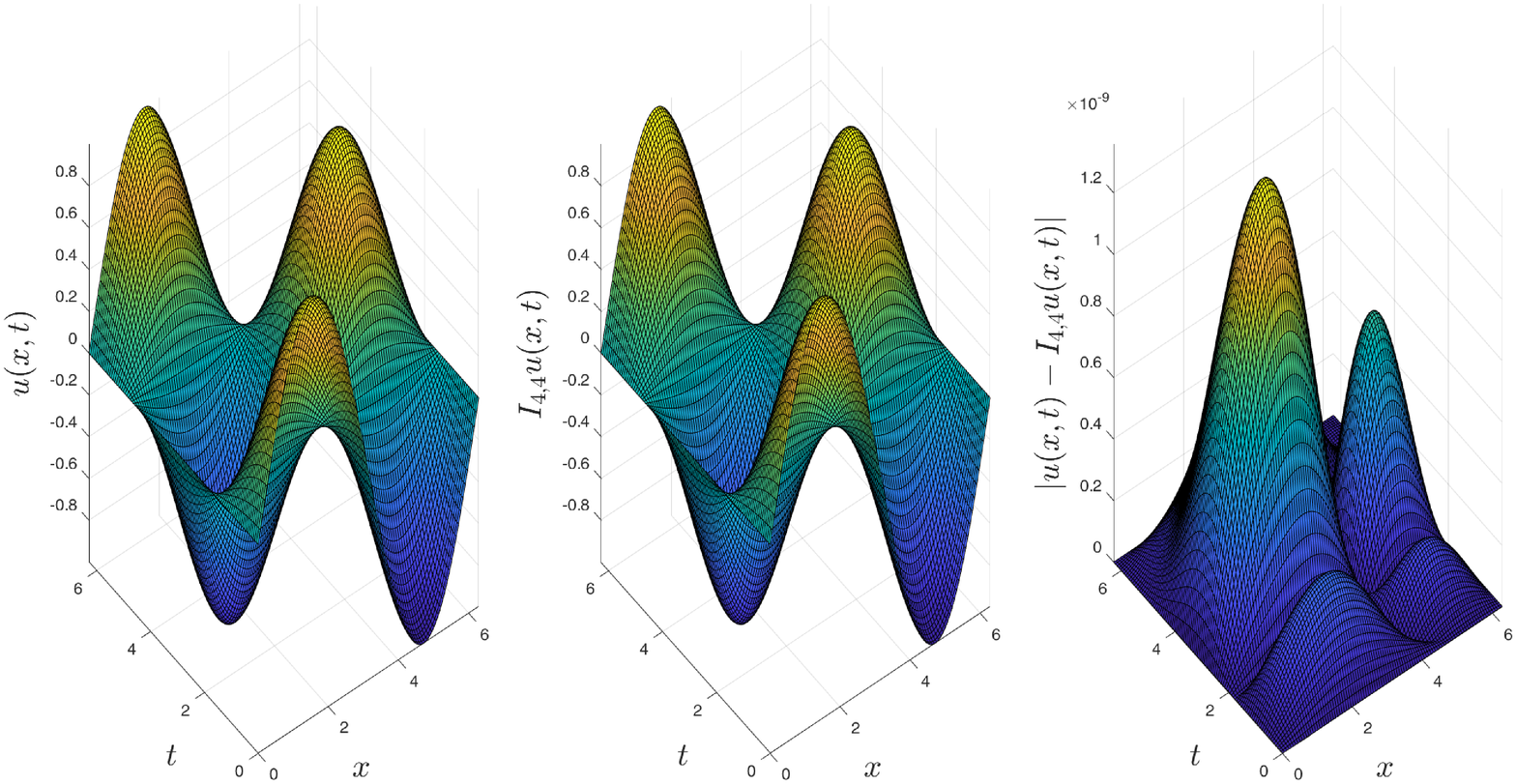}
\caption{The plots of the exact (left), approximate solution obtained by the FGPS method (middle), and the corresponding absolute errors (right) for Problem 1 using the parameter value $N_G = 40$. The plots were generated using $100$ equally-spaced nodes in the $x$- and $t$-directions from $0$ to $2 \pi$.}
\label{fig:Fig12}
\end{figure}

\begin{figure}[H]
\centering
\includegraphics[scale=0.4]{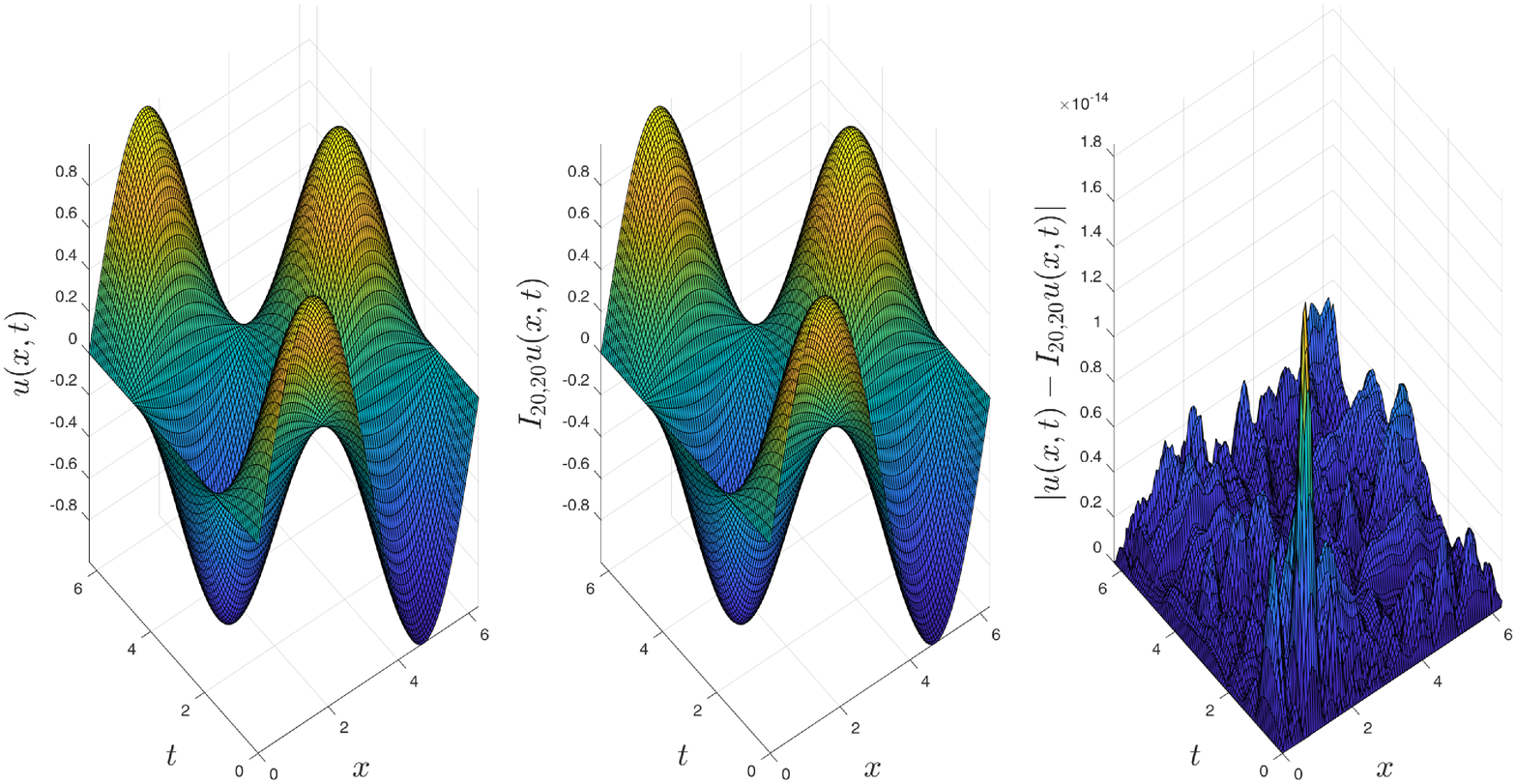}
\caption{The plots of the exact (left), approximate solution obtained by the FGPS method (middle), and the corresponding absolute errors (right) for Problem 1 using the parameter values $N_1 = N_2 = 20$. The plots were generated using $100$ equally-spaced nodes in the $x$- and $t$-directions from $0$ to $2 \pi$.}
\label{fig:Fig13}
\end{figure}

\textbf{Problem 2.} Consider the following time-dependent one-dimensional FPDE with variable coefficients and periodic solutions:
\begin{subequations}
\begin{align}
\sin(x\,t)\,\ED{L}{x}{1/3}{u(x,t)} &+ \cos\left(x+t^2\right)\,\ED{L}{t}{2/3}{u(x,t)} = \frac{{\sqrt[6]{{ - 1}}{e^{ - it}}\cos \left( {x + {t^2}} \right)}}{{2\Gamma \left( {\frac{1}{3}} \right)}}\left[ {{{( - 1)}^{2/3}}{e^{2it}}\left( {\Gamma \left( {\frac{1}{3}} \right) - \Gamma \left( {\frac{1}{3},30i} \right)} \right) - \Gamma \left( {\frac{1}{3}} \right) + \Gamma \left( {\frac{1}{3}, - 30i} \right)} \right]\nonumber\\
&+ \frac{{\sqrt[3]{3}{e^{ - i(3x + 1)}}\sin (xt)}}{{4\Gamma \left( {\frac{2}{3}} \right)}}\left[ {\left( {\sqrt 3  + i} \right){e^{6ix + 2i}}\left( {\Gamma \left( {\frac{2}{3}} \right) - \Gamma \left( {\frac{2}{3},90i} \right)} \right) + \left( {\sqrt 3  - i} \right)\Gamma \left( {\frac{2}{3}} \right) + 2{{( - 1)}^{5/6}}\Gamma \left( {\frac{2}{3}, - 90i} \right)} \right],\label{eq:1new1}
\end{align}
subject to the initial conditions
\begin{equation}\label{eq:2new1}
u(x,0) = \cos(3x+1),\quad u(0,t) = \cos 1 - \sin t,
\end{equation}
\end{subequations}
$\forall (x,t) \in \MBOmega_{2\pi/3, 2\pi}$. The exact solution of the problem is $u(x,t) = \cos(3x+1) - \sin t$. The numerical results are shown in Figure \ref{fig:Fig2}. $\kappa(\F{A})$ and the elapsed time for running the FGPS method were about $36.108$ and $0.093$ s, respectively, rounded to three decimal digits. 

\begin{figure}[H]
\centering
\includegraphics[scale=0.4]{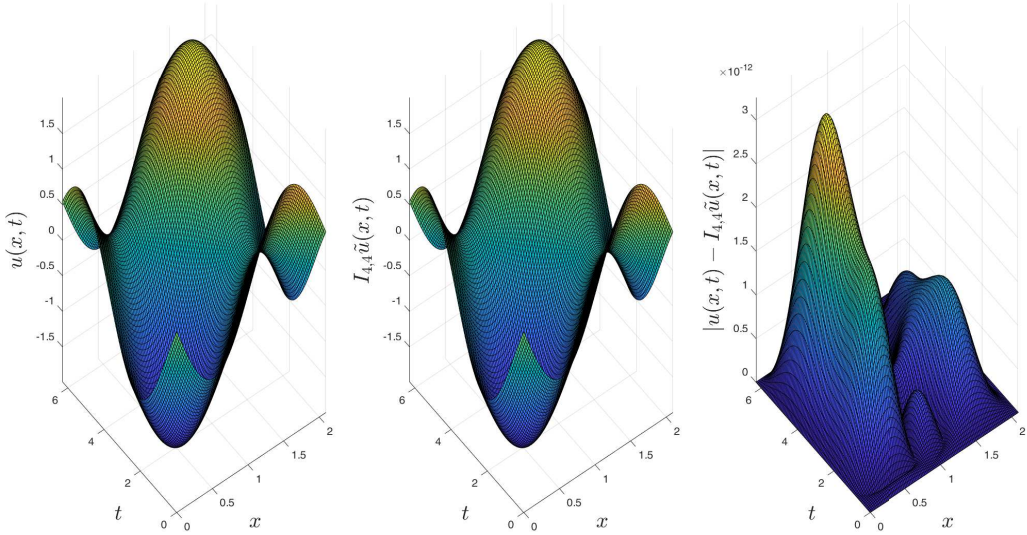}
\caption{The plots of the exact (left), approximate solution obtained by the FGPS method (middle), and the corresponding absolute errors (right) for Problem 2. The plots were generated using $100$ equally-spaced nodes in the $x$- and $t$-directions from $(0,0)$ to $(2 \pi/3, 2 \pi)$.}
\label{fig:Fig2}
\end{figure}

\textbf{Problem 3.} Consider the following time-dependent one-dimensional FPDE with variable coefficients and periodic solutions:
\begin{subequations}
\begin{align}
&e^{-x\,t}\,\ED{L}{x}{7/10}{u(x,t)} + \ln(x-t+3 \pi)\,\ED{L}{t}{4/5}{u(x,t)} = \frac{{{\left( { - 1} \right)}^{3/20}}{{{e}}^{ - \left( {\left( {t + 2{{i}}} \right)x} \right)}}\sin \left( t \right)}{{{2^{3/10}}\Gamma \left( {\frac{3}{{10}}} \right)}}\left[ {{\left( { - 1} \right)}^{7/10}}{{{e}}^{4{{i}}x}}\left( { - \Gamma \left( {\frac{3}{{10}}} \right) + \Gamma \left( {\frac{3}{{10}},60{{i}}} \right)} \right)\right.\\
&\left.+ \Gamma \left( {\frac{3}{{10}}} \right) - \Gamma \left( {\frac{3}{{10}}, - 60{{i}}} \right) \right] + \frac{\sqrt[{10}]{{ - 1}}{{{e}}^{ - {{i}}t}}\sin \left( {2x} \right)\ln \left( { - t + x + 3\pi } \right)}{{2\Gamma \left( {\frac{1}{5}} \right)}} \left[ {{{\left( { - 1} \right)}^{4/5}}{{{e}}^{2{{i}}t}}\left( { - \Gamma \left( {\frac{1}{5}} \right) + \Gamma \left( {\frac{1}{5},30{{i}}} \right)} \right) + \Gamma \left( {\frac{1}{5}} \right) - \Gamma \left( {\frac{1}{5}, - 30{{i}}} \right)} \right],\label{eq:1new1}
\end{align}
subject to the initial conditions
\begin{equation}\label{eq:2new1}
u(x,0) = u(0,t) = 0,
\end{equation}
\end{subequations}
$\forall (x,t) \in \MBOmega_{\pi, 2\pi}$. The exact solution of the problem is $u(x,t) = \sin(2 x) \sin(t)$. The numerical results are shown in Figure \ref{fig:Fig3}. $\kappa(\F{A})$ and the elapsed time for running the FGPS method were about $3.253$ and $0.093$ s, respectively, rounded to three decimal digits. 

\begin{figure}[H]
\centering
\includegraphics[scale=0.4]{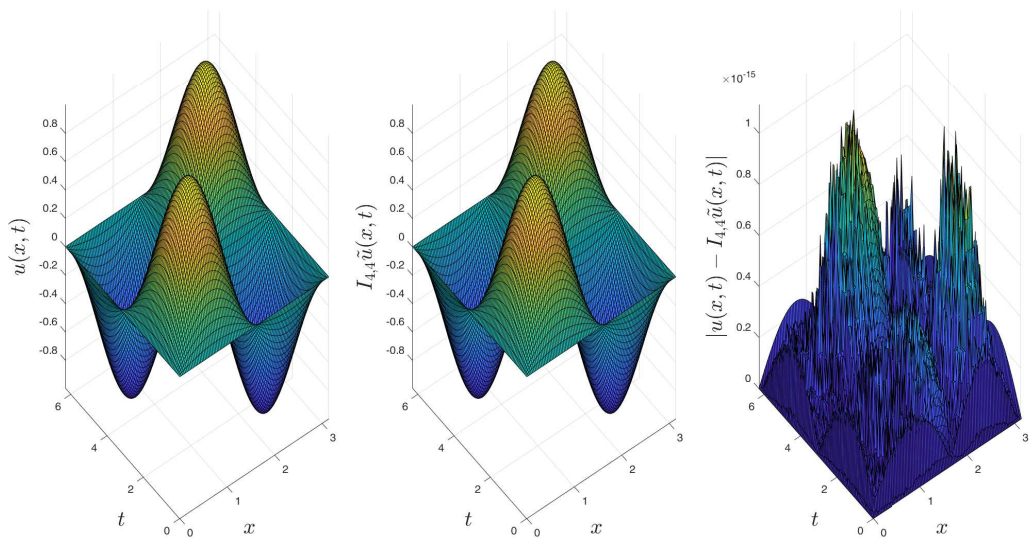}
\caption{The plots of the exact (left), approximate solution obtained by the FGPS method (middle), and the corresponding absolute errors (right) for Problem 3. The plots were generated using $100$ equally-spaced nodes in the $x$- and $t$-directions from $(0,0)$ to $(\pi, 2 \pi)$.}
\label{fig:Fig3}
\end{figure}

\textbf{Problem 4.} Consider the following time-dependent one-dimensional FPDE with variable coefficients and periodic solutions:
\begin{subequations}
\begin{align}
&(x+5)\,\ED{L}{x}{\alpha}{u(x,t)} - x t^2\,\ED{L}{t}{\beta}{u(x,t)} = \left[ {\left( {{t^2} - 1} \right)x - 5} \right]\sin (x + t) + \sinh (0.3)\left[ {(x + 5)\cos (x)\sin (t) - x\,{t^2}\sin (x)\cos (t)} \right],\label{eq:1new102May}
\end{align}
subject to the initial conditions
\begin{equation}\label{eq:2new1}
u(x,0) = \cos x,\quad u(0,t) = \cos t,
\end{equation}
\end{subequations}
$\forall (x,t) \in \MBOmega_{2\pi, 2\pi}$. The exact solution of the problem for $\alpha = \beta = 1$ is $u^{1,1}(x,t) = \cos (x + t) + \sinh (0.3)\sin (x)\sin (t)$, where $u^{\alpha,\beta}$ denotes here the solution associated with the fractional orders $\alpha$ and $\beta$. Figure \ref{fig:Fig4} shows the evolution of the solution when $\alpha = \beta = 0.8, 0.9, 0.99$, and $1$. Observe how the surface profile of the approximate solution, denoted here by $I_{N_1,N_2}^{\alpha,\beta}\tilde u(x,t)$, converges to $u^{1,1}(x,t)$ as $(\alpha, \beta) \to (1,1)$.

\begin{figure}[H]
\centering
\includegraphics[scale=0.4]{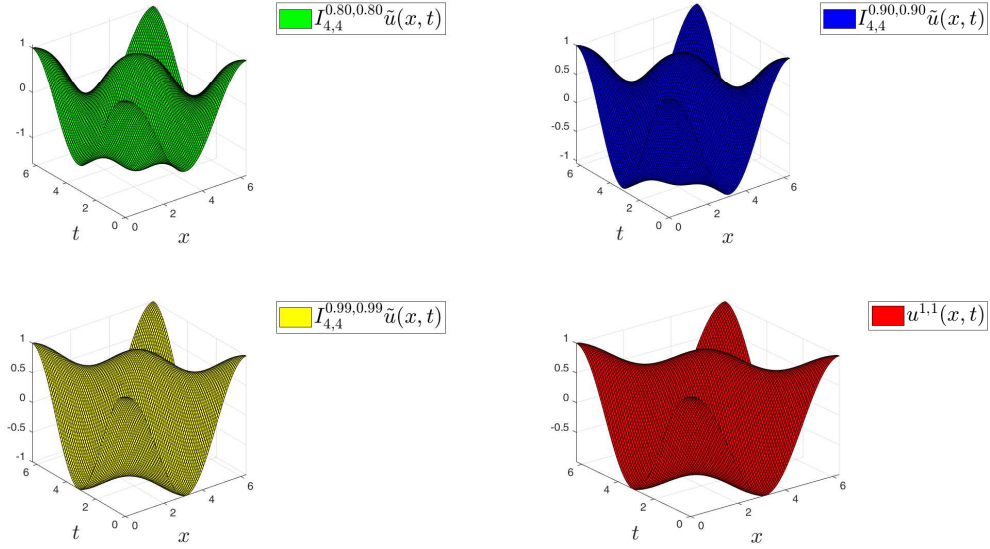}
\caption{The evolution of the approximate solution for some increasing fractional orders. The plots were generated using $100$ equally-spaced nodes in the $x$- and $t$-directions from $0$ to $2 \pi$.}
\label{fig:Fig4}
\end{figure}

\section{Conclusion}
\label{sec:Conc}
We proposed a new class of IVPs of time-dependent one-dimensional FPDEs with variable coefficients and periodic solutions. This class of problems can be solved numerically very accurately and efficiently using the proposed FGPS method. In particular, the FGPS method converts the IVP into a well conditioned linear system of equations using a PS method based on Fourier collocations and Gegenbauer quadratures. The reduced linear system has a sparse block global coefficient matrix that can be generated efficiently using the smart index matrix mapping $\C{N}$ given by Eq. \eqref{eq:smartindm1}, which allows most parts of the global matrix generation process to be optimized and arranged to work on chunks of vectors; thus, the efficiency increases by allowing vectorized operations.
This strategy enables us to solve the reduced linear system of equations very rapidly to nearly within the machine precision using standard linear system solvers. The method converges exponentially for sufficiently smooth periodic solutions using very small collocation mesh grids as proven by Theorem \ref{thm:Jan212022} and verified through extensive numerical simulations. The rigorous analysis of the error conveyed that the truncation errors are due to two sources of errors, namely, the FDC and FII errors. The former is due to the FGPS approximations to the FDs of the FPDE and the latter is induced by the Fourier interpolations of the solution at the collocation mesh grid. We discovered that reducing the FII error by increasing the collocation mesh size generally amplifies the FDC error, so it is recommended to apply the FGPS method using a low size of collocation mesh grid for sufficiently smooth periodic solutions. Corollary \ref{cor:1} shows that the errors are dominated only by the FDC errors when the periodic solution is $\beta$-analytic. We highly anticipate that the idea and results presented in this paper will become fruitful in the future to deal with more general problems involving FPDEs with periodic solutions.

\section*{Declarations}
\subsection*{Competing Interests}
The author declares there is no conflict of interests.

\subsection*{Availability of Supporting Data}
The author declares that the data supporting the findings of this study are available within the article.

\subsection*{Ethical Approval and Consent to Participate and Publish}
Not Applicable.

\subsection*{Human and Animal Ethics}
Not Applicable.

\subsection*{Consent for Publication}
Not Applicable.

\subsection*{Funding}
The author received no financial support for the research, authorship, and/or publication of this article.

\subsection*{Authors' Contributions}
The author confirms sole responsibility for the following: study conception and design, data collection, analysis and interpretation of results, and manuscript preparation.

\bibliographystyle{model1-num-names}
\bibliography{Bib}

\begin{thebibliography}{37}
\expandafter\ifx\csname natexlab\endcsname\relax\def\natexlab#1{#1}\fi
\providecommand{\bibinfo}[2]{#2}
\ifx\xfnm\relax \def\xfnm[#1]{\unskip,\space#1}\fi
\bibitem[{Bourafa et~al.(2021)Bourafa, Lozi, and
  Abdelouahab}]{bourafa2021periodic}
\bibinfo{author}{S.~Bourafa}, \bibinfo{author}{R.~Lozi}, \bibinfo{author}{M.-S.
  Abdelouahab},
\newblock \bibinfo{title}{On periodic solutions of fractional-order
  differential systems with a fixed length of sliding memory},
\newblock \bibinfo{journal}{Journal of Innovative Applied Mathematics and
  Computational Sciences} \bibinfo{volume}{1} (\bibinfo{year}{2021})
  \bibinfo{pages}{64--78}.
\bibitem[{Elgindy(2023)}]{elgindy2023fourier}
\bibinfo{author}{K.~T. Elgindy},
\newblock \bibinfo{title}{{F}ourier-{G}egenbauer pseudospectral method for
  solving periodic fractional optimal control problems},
\newblock \bibinfo{journal}{arXiv preprint arXiv:2304.04454}
  (\bibinfo{year}{2023}).
\bibitem[{Khan and Kumar(2023)}]{khan2023level}
\bibinfo{author}{M.~Khan}, \bibinfo{author}{P.~Kumar},
\newblock \bibinfo{title}{A level set based fractional order variational model
  for motion estimation in application oriented spectrum},
\newblock \bibinfo{journal}{Expert Systems with Applications}
  \bibinfo{volume}{219} (\bibinfo{year}{2023}) \bibinfo{pages}{119628}.
\bibitem[{Sioofy~Khoojine et~al.(2022)Sioofy~Khoojine, Mahsuli, Shadabfar,
  Hosseini, and Kordestani}]{sioofy2022proposed}
\bibinfo{author}{A.~Sioofy~Khoojine}, \bibinfo{author}{M.~Mahsuli},
  \bibinfo{author}{M.~Shadabfar}, \bibinfo{author}{V.~R. Hosseini},
  \bibinfo{author}{H.~Kordestani},
\newblock \bibinfo{title}{A proposed fractional dynamic system and {M}onte
  {C}arlo-based back analysis for simulating the spreading profile of
  {COVID}-19},
\newblock \bibinfo{journal}{The European Physical Journal Special Topics}
  (\bibinfo{year}{2022}) \bibinfo{pages}{1--11}.
\bibitem[{Lisha and Vijayakumar(2023)}]{lisha2023analytical}
\bibinfo{author}{N.~Lisha}, \bibinfo{author}{A.~Vijayakumar},
\newblock \bibinfo{title}{Analytical investigation of the heat transfer effects
  of non-{N}ewtonian hybrid {N}anofluid in {MHD} flow past an upright plate
  using the {C}aputo fractional order derivative},
\newblock \bibinfo{journal}{Symmetry} \bibinfo{volume}{15}
  (\bibinfo{year}{2023}) \bibinfo{pages}{399}.
\bibitem[{Su(2023)}]{su2023random}
\bibinfo{author}{N.~Su},
\newblock \bibinfo{title}{Random fractional partial differential equations and
  solutions for water movement in soils: {T}heory and applications},
\newblock \bibinfo{journal}{Hydrological Processes}  (\bibinfo{year}{2023})
  \bibinfo{pages}{e14844}.
\bibitem[{Arefin et~al.(2022)Arefin, Khatun, Uddin, and
  {\.I}n{\c{c}}}]{arefin2022investigation}
\bibinfo{author}{M.~A. Arefin}, \bibinfo{author}{M.~A. Khatun},
  \bibinfo{author}{M.~H. Uddin}, \bibinfo{author}{M.~{\.I}n{\c{c}}},
\newblock \bibinfo{title}{Investigation of adequate closed form travelling wave
  solution to the space-time fractional non-linear evolution equations},
\newblock \bibinfo{journal}{Journal of Ocean Engineering and Science}
  \bibinfo{volume}{7} (\bibinfo{year}{2022}) \bibinfo{pages}{292--303}.
\bibitem[{Asjad et~al.(2023)Asjad, Karim, Hussanan, Iqbal, and
  Eldin}]{asjad2023applications}
\bibinfo{author}{M.~I. Asjad}, \bibinfo{author}{R.~Karim},
  \bibinfo{author}{A.~Hussanan}, \bibinfo{author}{A.~Iqbal},
  \bibinfo{author}{S.~M. Eldin},
\newblock \bibinfo{title}{Applications of fractional partial differential
  equations for {MHD} casson fluid flow with innovative ternary nanoparticles},
\newblock \bibinfo{journal}{Processes} \bibinfo{volume}{11}
  (\bibinfo{year}{2023}) \bibinfo{pages}{218}.
\bibitem[{Owolabi et~al.(2022)Owolabi, Shikongo, and
  Atangana}]{owolabi2022fractal}
\bibinfo{author}{K.~M. Owolabi}, \bibinfo{author}{A.~Shikongo},
  \bibinfo{author}{A.~Atangana},
\newblock \bibinfo{title}{Fractal fractional derivative operator method on
  {MCF}-7 cell line dynamics},
\newblock \bibinfo{journal}{Methods of Mathematical Modelling and Computation
  for Complex Systems}  (\bibinfo{year}{2022}) \bibinfo{pages}{319--339}.
\bibitem[{Mascarenhas and Cavalcante(2022)}]{mascarenhas2022stochastic}
\bibinfo{author}{P.~V.~S. Mascarenhas}, \bibinfo{author}{A.~L.~B. Cavalcante},
\newblock \bibinfo{title}{Stochastic foundation to solving transient
  unsaturated flow problems using a fractional dispersion term},
\newblock \bibinfo{journal}{International Journal of Geomechanics}
  \bibinfo{volume}{22} (\bibinfo{year}{2022}) \bibinfo{pages}{04021262}.
\bibitem[{Hamada(2022)}]{hamada2022nonlinear}
\bibinfo{author}{Y.~M. Hamada},
\newblock \bibinfo{title}{Nonlinear fractional diffusion model for space-time
  neutron dynamics},
\newblock \bibinfo{journal}{Progress in Nuclear Energy} \bibinfo{volume}{154}
  (\bibinfo{year}{2022}) \bibinfo{pages}{104441}.
\bibitem[{Ibrahim et~al.(2022)Ibrahim, Jalab, Karim, Alabdulkreem, and
  Ayub}]{ibrahim2022medical}
\bibinfo{author}{R.~W. Ibrahim}, \bibinfo{author}{H.~A. Jalab},
  \bibinfo{author}{F.~K. Karim}, \bibinfo{author}{E.~Alabdulkreem},
  \bibinfo{author}{M.~N. Ayub},
\newblock \bibinfo{title}{A medical image enhancement based on generalized
  class of fractional partial differential equations},
\newblock \bibinfo{journal}{Quantitative imaging in medicine and surgery}
  \bibinfo{volume}{12} (\bibinfo{year}{2022}) \bibinfo{pages}{172}.
\bibitem[{Li and Zeng(2015)}]{li2015numerical}
\bibinfo{author}{C.~Li}, \bibinfo{author}{F.~Zeng}, \bibinfo{title}{Numerical
  methods for fractional calculus}, volume~\bibinfo{volume}{24},
  \bibinfo{publisher}{CRC Press}, \bibinfo{year}{2015}.
\bibitem[{Li and Cai(2019)}]{li2019theory}
\bibinfo{author}{C.~Li}, \bibinfo{author}{M.~Cai}, \bibinfo{title}{Theory and
  numerical approximations of fractional integrals and derivatives},
  \bibinfo{publisher}{SIAM}, \bibinfo{year}{2019}.
\bibitem[{Doha et~al.(2011)Doha, Bhrawy, and Ezz-Eldien}]{doha2011chebyshev}
\bibinfo{author}{E.~H. Doha}, \bibinfo{author}{A.~H. Bhrawy},
  \bibinfo{author}{S.~S. Ezz-Eldien},
\newblock \bibinfo{title}{A {C}hebyshev spectral method based on operational
  matrix for initial and boundary value problems of fractional order},
\newblock \bibinfo{journal}{Computers \& Mathematics with Applications}
  \bibinfo{volume}{62} (\bibinfo{year}{2011}) \bibinfo{pages}{2364--2373}.
\bibitem[{Bueno-Orovio et~al.(2014)Bueno-Orovio, Kay, and
  Burrage}]{bueno2014fourier}
\bibinfo{author}{A.~Bueno-Orovio}, \bibinfo{author}{D.~Kay},
  \bibinfo{author}{K.~Burrage},
\newblock \bibinfo{title}{Fourier spectral methods for fractional-in-space
  reaction-diffusion equations},
\newblock \bibinfo{journal}{BIT Numerical mathematics} \bibinfo{volume}{54}
  (\bibinfo{year}{2014}) \bibinfo{pages}{937--954}.
\bibitem[{Gottlieb and Orszag(1977)}]{gottlieb1977numerical}
\bibinfo{author}{D.~Gottlieb}, \bibinfo{author}{S.~A. Orszag},
  \bibinfo{title}{Numerical analysis of spectral methods: {T}heory and
  applications}, volume~\bibinfo{volume}{26}, \bibinfo{publisher}{Siam},
  \bibinfo{year}{1977}.
\bibitem[{Fornberg and Sloan(1994)}]{fornberg1994review}
\bibinfo{author}{B.~Fornberg}, \bibinfo{author}{D.~M. Sloan},
\newblock \bibinfo{title}{A review of pseudospectral methods for solving
  partial differential equations},
\newblock \bibinfo{journal}{Acta Numerica} \bibinfo{volume}{3}
  (\bibinfo{year}{1994}) \bibinfo{pages}{203--267}.
\bibitem[{Fornberg(1996)}]{Fornberg1996practical}
\bibinfo{author}{B.~Fornberg}, \bibinfo{title}{A Practical Guide to
  Pseudospectral Methods}, volume~\bibinfo{volume}{1},
  \bibinfo{publisher}{Cambridge university press}, \bibinfo{year}{1996}.
\bibitem[{Boyd(2001)}]{boyd2001chebyshev}
\bibinfo{author}{J.~P. Boyd}, \bibinfo{title}{{C}hebyshev and {F}ourier
  Spectral Methods}, \bibinfo{publisher}{Courier Corporation},
  \bibinfo{year}{2001}.
\bibitem[{Elgindy and Refat(2023)}]{ElgindyHareth2023a}
\bibinfo{author}{K.~T. Elgindy}, \bibinfo{author}{H.~M. Refat},
\newblock \bibinfo{title}{A direct integral pseudospectral method for solving a
  class of infinite-horizon optimal control problems using {G}egenbauer
  polynomials and certain parametric maps},
\newblock \bibinfo{journal}{AIMS Mathematics} \bibinfo{volume}{8}
  (\bibinfo{year}{2023}) \bibinfo{pages}{3561--3605}.
\bibitem[{Kopriva(2009)}]{kopriva2009}
\bibinfo{author}{D.~A. Kopriva}, \bibinfo{title}{Implementing spectral methods
  for partial differential equations: {A}lgorithms for scientists and
  engineers}, \bibinfo{publisher}{Springer Science \& Business Media},
  \bibinfo{year}{2009}.
\bibitem[{Elgindy(2019)}]{elgindy2019high}
\bibinfo{author}{K.~T. Elgindy},
\newblock \bibinfo{title}{A high-order embedded domain method combining a
  {P}redictor--{C}orrector-{F}ourier-{C}ontinuation-{G}ram method with an
  integral {F}ourier pseudospectral collocation method for solving linear
  partial differential equations in complex domains},
\newblock \bibinfo{journal}{Journal of Computational and Applied Mathematics}
  \bibinfo{volume}{361} (\bibinfo{year}{2019}) \bibinfo{pages}{372--395}.
\bibitem[{Elgindy(2023)}]{Elgindy2023a}
\bibinfo{author}{K.~T. Elgindy},
\newblock \bibinfo{title}{New optimal periodic control policy for the optimal
  periodic performance of a chemostat using a {F}ourier--{G}egenbauer-based
  predictor-corrector method},
\newblock \bibinfo{journal}{Journal of Process Control} \bibinfo{volume}{127}
  (\bibinfo{year}{2023}) \bibinfo{pages}{102995}.
\bibitem[{Elgindy(2022)}]{Elgindy2023b}
\bibinfo{author}{K.~T. Elgindy},
\newblock \bibinfo{title}{Numerical solution of nonlinear periodic optimal
  control problems using a {F}ourier integral pseudospectral method},
\newblock \bibinfo{journal}{arXiv preprint arXiv:2208.04305}
  (\bibinfo{year}{2022}).
\bibitem[{Elgindy(2023)}]{elgindy2023optimal}
\bibinfo{author}{K.~T. Elgindy},
\newblock \bibinfo{title}{Optimal periodic control of {U}nmanned {A}erial
  {V}ehicles based on {F}ourier integral pseudospectral and edge-detection
  methods},
\newblock \bibinfo{journal}{arXiv preprint arXiv:2303.02969}
  (\bibinfo{year}{2023}).
\bibitem[{Elgindy(2013)}]{Elgindy2013b}
\bibinfo{author}{K.~Elgindy}, \bibinfo{title}{{G}egenbauer Collocation
  Integration Methods: {A}dvances in Computational Optimal Control Theory},
  Ph.D. thesis, School of Mathematical Sciences, Faculty of Science, Monash
  University, \bibinfo{year}{2013}.
\bibitem[{Elgindy and Dahy(2018)}]{elgindy2018highb}
\bibinfo{author}{K.~T. Elgindy}, \bibinfo{author}{S.~A. Dahy},
\newblock \bibinfo{title}{High-order numerical solution of viscous {B}urgers'
  equation using a {C}ole-{H}opf barycentric {G}egenbauer integral
  pseudospectral method},
\newblock \bibinfo{journal}{Mathematical Methods in the Applied Sciences}
  \bibinfo{volume}{41} (\bibinfo{year}{2018}) \bibinfo{pages}{6226--6251}.
\bibitem[{Elgindy(2018)}]{elgindy2018optimal}
\bibinfo{author}{K.~T. Elgindy},
\newblock \bibinfo{title}{Optimal control of a parabolic distributed parameter
  system using a fully exponentially convergent barycentric shifted
  {G}egenbauer integral pseudospectral method},
\newblock \bibinfo{journal}{Journal of Industrial \& Management Optimization}
  \bibinfo{volume}{14} (\bibinfo{year}{2018}) \bibinfo{pages}{473}.
\bibitem[{Elgindy and Karas{\"o}zen(2019)}]{Elgindy2019b}
\bibinfo{author}{K.~T. Elgindy}, \bibinfo{author}{B.~Karas{\"o}zen},
\newblock \bibinfo{title}{High-order integral nodal discontinuous
  {G}egenbauer-{G}alerkin method for solving viscous {B}urgers' equation},
\newblock \bibinfo{journal}{International Journal of Computer Mathematics}
  \bibinfo{volume}{96} (\bibinfo{year}{2019}) \bibinfo{pages}{2039--2078}.
\bibitem[{Elgindy and Smith-Miles(2013)}]{Elgindy201382}
\bibinfo{author}{K.~T. Elgindy}, \bibinfo{author}{K.~A. Smith-Miles},
\newblock \bibinfo{title}{Optimal {G}egenbauer quadrature over arbitrary
  integration nodes},
\newblock \bibinfo{journal}{Journal of Computational and Applied Mathematics}
  \bibinfo{volume}{242} (\bibinfo{year}{2013}) \bibinfo{pages}{82--106}.
\bibitem[{Elgindy and Refat(2018)}]{elgindy2018high}
\bibinfo{author}{K.~T. Elgindy}, \bibinfo{author}{H.~M. Refat},
\newblock \bibinfo{title}{High-order shifted {G}egenbauer integral
  pseudo-spectral method for solving differential equations of {L}ane--{E}mden
  type},
\newblock \bibinfo{journal}{Applied Numerical Mathematics}
  \bibinfo{volume}{128} (\bibinfo{year}{2018}) \bibinfo{pages}{98--124}.
\bibitem[{Doha(1990)}]{Doha199075}
\bibinfo{author}{E.~Doha},
\newblock \bibinfo{title}{An accurate solution of parabolic equations by
  expansion in ultraspherical polynomials},
\newblock \bibinfo{journal}{{Computers \& Mathematics with Applications}}
  \bibinfo{volume}{19} (\bibinfo{year}{1990}) \bibinfo{pages}{75--88}.
\bibitem[{Elgindy and Smith-Miles(2013)}]{elgindy2013fast}
\bibinfo{author}{K.~T. Elgindy}, \bibinfo{author}{K.~A. Smith-Miles},
\newblock \bibinfo{title}{Fast, accurate, and small-scale direct trajectory
  optimization using a {G}egenbauer transcription method},
\newblock \bibinfo{journal}{Journal of Computational and Applied Mathematics}
  \bibinfo{volume}{251} (\bibinfo{year}{2013}) \bibinfo{pages}{93--116}.
\bibitem[{Szeg\"{o}(1939)}]{szeg1939orthogonal}
\bibinfo{author}{G.~Szeg\"{o}}, \bibinfo{title}{Orthogonal polynomials},
  volume~\bibinfo{volume}{23}, \bibinfo{publisher}{American Mathematical Soc.},
  \bibinfo{year}{1939}.
\bibitem[{Bj{\"o}rck and Dahlquist(2008)}]{bjorck2008numerical}
\bibinfo{author}{A.~Bj{\"o}rck}, \bibinfo{author}{G.~Dahlquist},
  \bibinfo{title}{Numerical Methods in Scientific Computing. Volume II},
  \bibinfo{year}{2008}.
\bibitem[{Elgindy(2016)}]{Elgindy20161}
\bibinfo{author}{K.~T. Elgindy},
\newblock \bibinfo{title}{High-order numerical solution of second-order
  one-dimensional hyperbolic telegraph equation using a shifted {G}egenbauer
  pseudospectral method},
\newblock \bibinfo{journal}{Numerical Methods for Partial Differential
  Equations} \bibinfo{volume}{32} (\bibinfo{year}{2016})
  \bibinfo{pages}{307--349}.

\end{thebibliography}
\end{document}